\theoremstyle{plain}
    \newtheorem{thm}{Theorem}[section]
    \newtheorem{prop}[thm]{Proposition}
    \newtheorem{lemma}[thm]{Lemma}
    \newtheorem{conj}[thm]{Conjecture}
    \newtheorem{cor}[thm]{Corollary}
\theoremstyle{definition}
    \newtheorem{defi}[thm]{Definition}
\theoremstyle{remark}
    \newtheorem{rem}[thm]{Remark}
    \newtheorem{example}[thm]{Example}
\numberwithin{equation}{section}
\newcommand{\rar}{\ensuremath{\rightarrow}}
\newcommand{\lrar}{\ensuremath{\longrightarrow}}
\newcommand{\la}{\langle}
\newcommand{\ra}{\rangle}
\newcommand{\Hom}{\textup{Hom}}
\newcommand{\stmod}{\textup{stmod}}
\newcommand{\uHom}{\underline{\Hom}}
\newcommand{\mcP}{\mathcal P}
\newcommand{\G}{\mathcal G}
\newcommand{\stk}[1]{\stackrel{#1}{\rightarrow}}
\newcommand{\lstk}[1]{\stackrel{#1}{\longrightarrow}}
\newcommand{\Ext}{\textup{Ext}}
\newcommand{\Dim}{\textup{Dim}}
\newcommand{\End}{\text{End}}
\def\HHH{\operatorname{H}\nolimits}
\def\HHHH{\operatorname{\hat{H}}\nolimits}
\def\Hom{\operatorname{Hom}\nolimits}
\def\Ext{\operatorname{Ext}\nolimits}
\def\hExt{\operatorname{\widehat{Ext}}\nolimits}
\def\Id{\operatorname{Id}\nolimits}
\def\res{\operatorname{res}\nolimits}
\def\CE{{\mathcal{E}}}
\def\CM{{\mathcal{M}}}
\def\CN{{\mathcal{N}}}
\def\CJ{{\mathcal{J}}}
\def\CK{{\mathcal{K}}}
\def\CX{{\mathcal{X}}}
\def\bZ{{\mathbb Z}}
\def\bF{{\mathbb F}}
\begin{document}

\title{Finite generation of Tate cohomology}

\date{\today}

\author[Jon F. Carlson]{Jon F. Carlson}
\address{Department of Mathematics \\
University of Georgia \\
Athens, GA 30602, USA}
\email{jfc@math.uga.edu}

\author{Sunil K. Chebolu}
\address{Department of Mathematics \\
Illinois State University \\
Campus box 4520 \\
Normal, IL 61790, USA}
\email{schebol@ilstu.edu}

\author{J\'{a}n Min\'{a}\v{c}}
\address{Department of Mathematics\\
University of Western Ontario\\
London, ON N6A 5B7, Canada}
\email{minac@uwo.ca}

\thanks{The first author is partially supported by a grant from  NSF and the
third  author is supported from NSERC}

\keywords{Tate cohomology, finite generation, periodic modules,
support varieties, stable module category, almost split sequence}
\subjclass[2000]{Primary 20C20, 20J06; Secondary 55P42}

\begin{abstract}

Let $G$ be a finite group and let $k$ be a field of characteristic $p$. Given 
a finitely generated indecomposable non-projective $kG$-module $M$, we
conjecture that if the Tate cohomology $\HHHH^*(G, M)$ of $G$ with coefficients
in $M$ is finitely generated over the Tate cohomology ring $\HHHH^*(G, k)$, 
then the support variety $V_G(M)$ of $M$ is equal to the entire maximal
ideal spectrum $V_G(k)$.  We prove various results  which support this
conjecture.  The converse of this conjecture is established for modules 
in the connected component of $k$ in the stable Auslander-Reiten quiver 
for $kG$, but it is shown to be false in general.
It is also shown that all finitely generated $kG$-modules over
a group $G$ have finitely generated Tate cohomology if and only if $G$ has
periodic cohomology.

\end{abstract}

\dedicatory{Dedicated to Professor Luchezar Avramov on his sixtieth birthday.}

\maketitle

\thispagestyle{empty}


\section{Introduction}

Tate cohomology was introduced by Tate in his celebrated paper ~\cite{Tate}
where he proved the main theorem of class field theory in a remarkably simple
way using Tate cohomology. After Cartan and Eilenberg's treatment
~\cite{CarEil} of Tate cohomology and Swan's basic 
results on free group actions on
spheres ~\cite{Swan-PERIOD}, Tate cohomology became one of the basic tools in
current mathematics. Our aim in this paper is to address a fundamental question:
when is the Tate cohomology with coefficients in a module finitely generated 
over the Tate cohomology ring of the group. 

Suppose $G$ be a finite group and let $k$ be a field of characteristic $p$. If $M$
is a finitely generated $kG$-module, then a well-known result in group
cohomology due to Golod, Evens and Venkov says that $\HHH^*(G,M)$ is finitely
generated as a graded module over $\HHH^*(G, k)$. Our goal is to investigate a
similar finite-generation result for Tate cohomology. More precisely,  if $M$
is a finitely generated $kG$-module, then we want to know whether the
Tate cohomology $\HHHH^*(G, M)$ of $G$ with coefficients
in $M$ is  finitely generated as a graded module 
over the Tate cohomology ring $\HHHH^*(G, k)$. In Section \ref{Sec:motivation}
we explain one reason for being interested in this problem.
In general, it seems that the Tate cohomology of a module is seldom finitely
generated, which is a striking contrast to the situation with ordinary cohomology. 
However, there are some notable exceptions. 
Our investigations have led us to a conjecture which we state as follows.

\begin{conj} \label{conjec}
Let $G$ be a finite group and let $M$ be an indecomposable 
finitely generated $kG$-module such that $\HHH^*(G,M) \neq \{0\}$. 
If $\HHHH^*(G, M)$ is finitely generated
over $\HHHH^*(G, k)$, then  the support variety $V_G(M)$ of $M$ is equal to the
entire maximal ideal spectrum $V_G(k)$ of the group cohomology ring.
\end{conj}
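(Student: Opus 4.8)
The plan is to contrapose: assume $V_G(M) \subsetneq V_G(k)$ and deduce that $\HHHH^*(G,M)$ is not finitely generated over $\HHHH^*(G,k)$. The starting observation is that $\HHHH^*(G,M)$ being a module over the Tate ring $\HHHH^*(G,k)$ means it is in particular a module over the ordinary cohomology ring $\HHH^{\geq 1}(G,k)$, and its support, as a module over $\HHH^*(G,k)$ (or rather over the maximal ideal spectrum), is controlled by $V_G(M)$. More precisely, I would like to show that $\HHHH^*(G,M)$ is ``supported on $V_G(M)$'' in an appropriate sense: for a homogeneous $\zeta \in \HHH^{\geq 1}(G,k)$ whose corresponding hypersurface contains $V_G(M)$, multiplication by $\zeta$ should be nilpotent (or even zero in high degrees) on $\HHHH^*(G,M)$. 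This is the Tate-cohomology analogue of the standard fact relating annihilators of $\Ext$-modules to support varieties, and it follows from the theory of Carlson modules / rank varieties together with the observation that $\HHHH^*(G,M) \cong \uHom_{kG}^*(k,M)$ in the stable module category.

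Granting that, the strategy is a dimension-shifting / counting argument. Pick a homogeneous parameter system for $\HHH^*(G,k)$. Since $V_G(M)$ is a proper closed subvariety, there is a nonzero homogeneous element $\zeta$ of positive degree $d$ in $\HHH^*(G,k)$ that vanishes on $V_G(M)$, hence acts nilpotently on $\HHHH^*(G,M)$. But because $M$ is non-projective and $\HHH^*(G,M) \neq 0$, the Tate cohomology $\HHHH^*(G,M)$ is nonzero in arbitrarily high \emph{and} arbitrarily low degrees (Tate cohomology of a finitely generated non-projective module over a group with non-periodic cohomology is nonzero in infinitely many degrees in both directions; if the cohomology is periodic the conjecture's conclusion $V_G(M) = V_G(k)$ is automatic since $V_G(k)$ has dimension one and every non-projective module has support of dimension at least one). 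The key tension: $\HHHH^*(G,k)$ itself, as a ring, is not Noetherian, and $\zeta$ acts nilpotently on it too near degree $0$; so I must be careful to extract the non-finite-generation of $\HHHH^*(G,M)$ from the mismatch between the positive-degree part (governed by $\HHH^*$, where Golod–Evens–Venkov gives finite generation of $\HHH^*(G,M)$) and the negative-degree part, which by Tate duality is governed by $\HHH^*(G, M^* \otimes \text{(dualizing module)})$ — and whose support can again be pinned to $V_G(M)$.

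Concretely, here is the mechanism I expect to work. If $\HHHH^*(G,M)$ were finitely generated over $\HHHH^*(G,k)$ by generators in degrees lying in some interval $[-N, N]$, then every element of $\HHHH^{n}(G,M)$ for $n$ large (say $n > N + \deg$ of the highest-degree relation) would be an $\HHH^{\geq 1}(G,k)$-combination of the positive-degree generators, so $\HHHH^{\gg 0}(G,M) = \HHH^{\gg 0}(G,M)$ would be generated over $\HHH^*(G,k)$ by finitely many elements together with classes pushed up from degrees $\leq N$; this is fine and consistent. The contradiction instead comes from the \emph{negative} tail: finite generation over $\HHHH^*(G,k)$ forces $\HHHH^{n}(G,M)$ for $n \ll 0$ to be obtained from generators by multiplying by \emph{negative}-degree elements of $\HHHH^*(G,k)$, i.e. by elements of $\HHHH^{<0}(G,k) \cong \HHH^{\geq 0}(G,k)^{\vee}$ (Tate duality); but the only way a product in Tate cohomology lands in very negative degree is via pairing against high-degree ordinary cohomology, and because $\zeta$ (positive degree, vanishing on $V_G(M)$) kills $\HHHH^*(G,M)$ up to nilpotence, the image of $\HHH^{\geq 1}(G,k)$ acting downward is too small — it cannot exhaust $\HHHH^{n}(G,M)$ whose total dimension, by another application of Golod–Evens–Venkov to the dual side, grows like a polynomial of degree $\dim V_G(M) - 1 < \dim V_G(k) - 1$, whereas the span of finitely many generators hit by all of $\HHHH^{<0}(G,k)$ would have to fill something growing like $\dim V_G(k) - 1$. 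Comparing Krull dimensions (equivalently, rates of growth of $\dim_k \HHHH^n(G,M)$) yields the contradiction.

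The main obstacle, and the step I would spend the most effort on, is making the duality bookkeeping precise: Tate cohomology is \emph{not} finitely generated over its own positive part, so one cannot naively split $\HHHH^*(G,M)$ into a ``positive Noetherian piece'' and a ``negative Noetherian piece.'' I expect to need a clean statement of the form: $\HHHH^*(G,M)$ is finitely generated over $\HHHH^*(G,k)$ if and only if both $\HHH^{\geq n_0}(G,M)$ is generated over $\HHH^{*}(G,k)$ by finitely many elements \emph{and} the negative part is the linear dual of $\HHH^{\geq m_0}(G, \Omega M)$ being similarly finitely generated — essentially because the cap product identifies $\HHHH^{<0}(G,M)$ with $\HHH^{\geq 1}(G, M)^{\vee}$ up to a shift (Tate duality), under which ``finitely generated $\HHHH^*(G,k)$-module'' translates to ``finitely \emph{co}generated,'' i.e. the dual being finitely generated. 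The subtlety that the growth rate of $\HHH^*(G,M)$ is governed by $\dim V_G(M)$ while that of $\HHHH^*(G,k)$ is governed by $\dim V_G(k)$ then does all the work; I would isolate this as the key lemma and treat the rest as bookkeeping, handling the periodic-cohomology case (where everything collapses and the conjecture's conclusion holds trivially) separately at the outset.
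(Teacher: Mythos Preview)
This statement is Conjecture~\ref{conjec} in the paper; the paper does \emph{not} prove it in general, only under additional hypotheses (Theorem~\ref{theorem-regular}, Corollary following it). So you are attempting something the authors do not claim, and the gap in your argument is a real one.

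The growth-rate comparison you set up runs in the wrong direction. If $V_G(M) \subsetneq V_G(k)$, then $\dim_k \HHHH^n(G,M)$ grows (in $|n|$) like a polynomial of degree $\dim V_G(M)-1$, which is \emph{smaller} than the growth rate $\dim V_G(k)-1$ of $\HHHH^*(G,k)$. A module that is small relative to the ring is, on dimension grounds alone, \emph{more} likely to be finitely generated, not less; so no contradiction can come from comparing polynomial growth rates. Concretely: given finitely many generators $\mu_1,\dots,\mu_r$ of $\HHHH^*(G,M)$, the span $\sum_i \HHHH^{<0}(G,k)\cdot \mu_i$ lives inside $\HHHH^*(G,M)$ and therefore has growth bounded by $\dim V_G(M)-1$; nothing you wrote shows this span is \emph{properly} contained in the negative tail. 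Your sentence ``the span of finitely many generators hit by all of $\HHHH^{<0}(G,k)$ would have to fill something growing like $\dim V_G(k)-1$'' is where the argument breaks: that span is a submodule of $\HHHH^*(G,M)$, so its growth is bounded by that of $\HHHH^*(G,M)$, not of $\HHHH^*(G,k)$. The Tate-duality bookkeeping you flag as ``the main obstacle'' is not just bookkeeping; it is exactly the place where a genuine structural input is needed, and growth rates do not supply it.

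What the paper actually uses, in the cases where the conjecture is verified, is the Benson--Carlson result that products of negative-degree elements in $\HHHH^*(G,k)$ vanish whenever $\HHH^*(G,k)$ has depth at least two (Theorem~\ref{neqcohoeqzero}). From this one gets the ``bounded finitely generated submodules'' condition (Definition~\ref{BFGS}): any submodule of $\HHHH^*(G,M)$ generated by elements of degree $\geq m$ stays in degrees $\geq N(m)$, because pushing down requires multiplying by negative-degree classes, and those cannot compose. Combined with a choice of \emph{regular} $\zeta$ vanishing on $V_G(M)$ (so that the relevant kernel $\CK^*$ lives entirely in negative degrees by Lemma~\ref{KJbounds}), this yields non-finite-generation via Lemma~\ref{bfgs=nfg}. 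None of these ingredients---vanishing of negative products, existence of a regular $\zeta$ through $V_G(M)$---is available for arbitrary $G$, which is precisely why the statement remains a conjecture.
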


The condition that $\HHH^*(G,M) \neq 0$ is certainly necessary since there are
many modules with proper support varieties and vanishing cohomology \cite{ben-car-rob}. 
Perhaps it is necessary to require that $M$ lies in the thick subcategory of the stable 
category generated by $k$.
 
We have evidence for the conjecture from two directions. First, the results of 
\cite{negtate} indicate that products in negative Tate cohomology are often
zero and we can use this to develop boundedness conditions on finitely 
generated modules over Tate cohomology. Under the right circumstances, 
these conditions imply infinite generation of the Tate cohomology. Secondly, 
for groups having $p$-rank at least two, we can show that many periodic
modules fail to have finitely generated Tate cohomology. Indeed, we prove that
for any such group there is at least one module whose Tate cohomology is not
finitely generated. Hence, the only groups having the property that every
finitely generated $kG$-module has finitely generated Tate cohomology have
$p$-rank one or zero. 

On the other hand, in general, there are numerous modules which have finitely
generated Tate cohomology. In the last section we show some ways in which 
these modules can be constructed. It turns out that the constructions are 
consistent with the Auslander-Reiten quiver for $kG$-modules. That is, if a 
nonprojective module in a connected 
component of the Auslander-Reiten quiver has finitely
generated Tate cohomology, then so does every module in that component. 

The paper is organized as follows. We begin in Section \ref{Sec:motivation} by
explaining how we had naturally arrived at the problem of finite generation of
Tate cohomology. Sections \ref{sec:moduleswithBFGS} and
\ref{sec:periodicmodules} deal with modules whose Tate cohomology is not
finitely generated and contain proofs in the two 
directions mentioned above. In Section \ref{sec:almostsplit} we 
prove  affirmative results which provides a good source of modules 
whose Tate cohomology is finitely generated.

Throughout the paper $G$ denotes a non-trivial finite 
group, and all $kG$-modules are
assumed to be finitely generated. We use standard facts and notation of
the stable module category  of $kG$ \cite{carlson-modulesandgroupalgebras},
support varieties \cite{Bbook, CTVZ}, and of almost split sequences 
\cite{Bbook}.


\section{Universal ghosts in $\stmod(kG)$} \label{Sec:motivation}
Here we explain briefly how we had arrived at the problem of finite
generation of Tate cohomology. More details can be found in \cite{CCM2, CCM}.
The question of finite generation is very natural. The finite
generation of the ordinary cohomology has been very important in the
development of the theory of support varieties and in other connections. 
For Tate cohomology, almost nothing is known about the the question of finite
generation beyond what is in this paper. 

The following natural question was raised in \cite{CCM}:  when does the
Tate cohomology functor detect trivial maps in the stable module category
$\stmod(kG)$ of finitely generated $kG$-modules? A map 
$\phi\colon M \rar N$ between finitely generated
$kG$-modules is said to be a \emph{ghost} if the induced map in Tate
cohomology groups
\[ \uHom_{kG}(\Omega^i k, M) \lrar \uHom_{kG}(\Omega^i k, N)\]
is zero for each integer $i$. With this definition, the above question is
equivalent to asking when  every ghost map in $\stmod(kG)$ trivial.  
In addressing this question, it is convenient to have a universal
ghost out of any finitely generated $kG$-module $M$ in $\stmod(kG)$, i.e., a
ghost map $\phi \colon M \rar N$ in $\stmod(kG)$ such that  every ghost out of
$M$ factors through $N$ via $\phi$. The point is that if the 
universal ghost vanishes, then all ghosts vanish.

So the  problem  boils down to finding a universal ghost out of $M$ (if it
exists) for every module $M$ in $\stmod(kG)$. The point is that if the Tate cohomology
$\HHHH^*(G,M)$ is finitely generated as a graded module over $\HHHH^*(G, k)$,
then a universal ghost out of $M$ can be constructed in 
$\stmod(kG)$. This is done as follows. Let $\{ v_j \}$ be a finite set of
homogeneous generators for $\HHHH^*(G,M)$ as a $\HHHH^*(G, k)$-module. These
generators can be assembled into a map
\[
\bigoplus_j \, \Omega^{|v_j|}\,k \lrar M
\]
in $\stmod(kG)$. This map can then be completed to a triangle
\begin{equation} \label{eq:univ-ghost}
\bigoplus_j \, \Omega^{|v_j|}\,k \lrar M \lstk{\Psi_M} F_M .
\end{equation}
By construction, it is clear that the first map in the
above triangle is surjective on the functors $\uHom_{kG}(\Omega^l k, -)$
for each $l$.
Therefore, the second map $\Psi_M$ must be a ghost. Thus we have
the following proposition.

\begin{prop}\label{prop:fduniv}
Suppose that  $M$ is a finitely generated $kG$-module such that $\HHHH^*(G,M)$ is
finitely generated as a graded module over $\HHHH^*(G, k)$. Then the map
$\Psi_M \colon M \rar F_M$ in the above triangle is a universal ghost out of
$M$.
\end{prop}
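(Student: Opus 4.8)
The plan is to verify directly that the triangle \eqref{eq:univ-ghost} exhibits $\Psi_M$ as a universal ghost; that is, (a) $\Psi_M$ is a ghost, and (b) every ghost $\phi\colon M \rar N$ in $\stmod(kG)$ factors through $\Psi_M$. Part (a) is already established in the discussion preceding the proposition: the first map $\alpha\colon \bigoplus_j \Omega^{|v_j|} k \rar M$ is built from a generating set for $\HHHH^*(G,M)$, hence induces a surjection on $\uHom_{kG}(\Omega^l k, -)$ for every $l$; applying the long exact sequence obtained by mapping $\Omega^l k$ into the triangle, the connecting map forces $\uHom_{kG}(\Omega^l k, \Psi_M)=0$ for all $l$, so $\Psi_M$ is a ghost. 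The substance of the proof is part (b).

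For (b), let $\phi\colon M \rar N$ be an arbitrary ghost. I would show that the composite $\phi\circ\alpha\colon \bigoplus_j \Omega^{|v_j|} k \rar N$ is zero in $\stmod(kG)$. The key observation is that $\alpha$ restricted to the summand $\Omega^{|v_j|} k$ is precisely (a representative of) the cohomology class $v_j \in \HHHH^{|v_j|}(G,M) = \uHom_{kG}(\Omega^{|v_j|} k, M)$; hence $\phi\circ\alpha$ restricted to that summand is the image of $v_j$ under $\phi_*\colon \uHom_{kG}(\Omega^{|v_j|} k, M) \rar \uHom_{kG}(\Omega^{|v_j|} k, N)$. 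But $\phi$ is a ghost, so this map is zero by definition, and therefore $\phi\circ\alpha = 0$. Now rotate the triangle \eqref{eq:univ-ghost} to the triangle $\Omega F_M \rar \bigoplus_j \Omega^{|v_j|} k \stk{\alpha} M \stk{\Psi_M} F_M$ and apply $\uHom_{kG}(-,N)$. Since $\phi\circ\alpha = 0$, exactness of the resulting long exact sequence shows that $\phi$ lies in the image of $(\Psi_M)^*\colon \uHom_{kG}(F_M, N) \rar \uHom_{kG}(M, N)$, i.e.\ there exists $\psi\colon F_M \rar N$ with $\phi = \psi\circ\Psi_M$. This is exactly the required factorization, completing the verification that $\Psi_M$ is a universal ghost out of $M$.

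I do not anticipate a serious obstacle; the argument is essentially a formal consequence of the triangulated structure of $\stmod(kG)$ once the correct identification is in place. The one point requiring a little care is the identification of $\alpha|_{\Omega^{|v_j|}k}$ with the class $v_j$ itself — that is, keeping track of the convention under which a homogeneous element of $\HHHH^n(G,M)$ is the same datum as a stable map $\Omega^n k \rar M$, so that "assembling the generators into a map" really does produce the map whose precomposition with $\phi$ computes $\phi_*(v_j)$. This is a bookkeeping matter rather than a genuine difficulty. (Note also that finite generation is used only to ensure the index set $\{j\}$ is finite, so that $\bigoplus_j \Omega^{|v_j|}k$ is again a finitely generated $kG$-module and the triangle lives in $\stmod(kG)$; the universal property itself would go through for the infinite coproduct in $\StMod(kG)$ as well.)
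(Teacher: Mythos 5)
Your proof is correct and takes essentially the same approach as the paper: the paper simply asserts that universality of $\Psi_M$ ``is easy to see,'' and your argument---$\phi\circ\alpha=0$ because each component of $\alpha$ represents a class $v_j$ killed by the ghost $\phi$, followed by the long exact sequence from the rotated triangle---is precisely the standard verification being alluded to. Your closing remark on the role of finite generation also matches the paper's only explicit comment in its proof.
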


\begin{proof}
Universality of $\Psi_M$ is easy to see.
For the last statement, we note that because the sum is finite, $\oplus_j
\Omega^{|v_j|}\,k$ is finitely generated. 
\end{proof}


\section{Modules with bounds on finitely generated submodules} 
\label{sec:moduleswithBFGS}

In this section we apply our main method for showing that modules
over Tate cohomology are not finitely generated. We explore the 
implications of the following condition. 

The material in this section draws heavily on the methods introduced in the paper
\cite{negtate}.

\begin{defi} \label{BFGS}
We say that a graded module $T = \oplus_{n \in \bZ} T^n$ over $\HHHH^*(G,k)$
has bounded finitely generated submodules if for any $m$ there is a 
number $N = N(m)$ such that the submodule $S$
of $T$ generated by $\oplus_{n>m} T^n$ is contained in
$\oplus_{n>N} T^n$.  
\end{defi}

\begin{lemma} \label{bfgs=nfg}
If a graded module $T = \oplus_{n \in \bZ} T^n$ 
over $\HHHH^*(G,k)$ has bounded finitely generated submodules and
if $T^n \neq \{0\}$ for arbitrarily
small (meaning negative) values of $n$, 
then $T$ is not a finitely
generated module over $\HHHH^*(G,k)$.
\end{lemma}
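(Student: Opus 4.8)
The plan is to argue by contradiction: suppose $T$ is finitely generated over $\HHHH^*(G,k)$. Then $T$ is generated by finitely many homogeneous elements $t_1, \dots, t_r$, and we may let $m_0 = \min_i |t_i|$ be the smallest of their degrees. Every element of $T$ is then an $\HHHH^*(G,k)$-linear combination of the $t_i$; since $\HHHH^*(G,k)$ is nonnegatively graded in the relevant sense (or at least, since the generators have degrees bounded below), the products $\zeta \cdot t_i$ with $\zeta \in \HHHH^*(G,k)$ all land in degrees $\ge m_0$... but this is exactly the obstacle, because Tate cohomology $\HHHH^*(G,k)$ has nonzero elements in arbitrarily negative degrees too. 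So a naive ``degrees are bounded below'' argument fails, and one must use the bounded-finitely-generated-submodules hypothesis more carefully.

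Here is the correct route. Fix $m$ to be smaller than $m_0 = \min_i |t_i|$. Since $T$ is generated by $t_1, \dots, t_r$ and each $t_i$ lies in $\bigoplus_{n > m} T^n$, the submodule $S$ of $T$ generated by $\bigoplus_{n>m} T^n$ is all of $T$: indeed $S \supseteq \HHHH^*(G,k)\cdot\{t_1,\dots,t_r\} = T$. By the bounded-finitely-generated-submodules property, there is an $N = N(m)$ with $S \subseteq \bigoplus_{n > N} T^n$, hence $T \subseteq \bigoplus_{n>N} T^n$, i.e. $T^n = \{0\}$ for all $n \le N$. This directly contradicts the hypothesis that $T^n \neq \{0\}$ for arbitrarily small (negative) values of $n$. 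Therefore $T$ cannot be finitely generated.

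The key steps, in order: (1) assume finite generation and extract homogeneous generators $t_1,\dots,t_r$ with minimal degree $m_0$; (2) choose $m < m_0$ so that all generators lie in $\bigoplus_{n>m}T^n$; (3) conclude that the submodule $S$ generated by $\bigoplus_{n>m}T^n$ equals $T$; (4) apply Definition \ref{BFGS} to get $T = S \subseteq \bigoplus_{n>N}T^n$, forcing $T$ to vanish in all degrees $\le N$; (5) contradict the unboundedness-below hypothesis. I expect the only real subtlety — and the reason the statement is not completely trivial — to be step (3): one must observe that it suffices to pick $m$ below all the generator degrees rather than trying to control arbitrary negative-degree elements, since the submodule \emph{generated by} the high-degree part automatically captures every generator and hence all of $T$. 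Everything else is bookkeeping with graded modules.
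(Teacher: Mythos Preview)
Your argument is correct and is exactly the approach taken in the paper, only spelled out in more detail: the paper's one-line proof observes that any finitely generated submodule has its generators in $\bigoplus_{n>m} T^n$ for some $m$, hence by the bounded-finitely-generated-submodules property lies in $\bigoplus_{n>N} T^n$ and so cannot be all of $T$. Your steps (1)--(5) unpack precisely this reasoning.
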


\begin{proof}
The proof is an immediate consequence of the definition. The point is that
any finitely generated submodule of $T$ is contained in 
$\sum_{n>m} T^n$ for some $m$ and hence cannot generate all of $T$.
\end{proof}

\begin{rem} \label{submaxgrowth} 
There is a more general formulation of the boundedness condition that might
be useful, though we do not use it in this paper. 
We say that $T^*$ has submaximal growth of finitely 
generated submodules if the degree of the pole at 1 of the 
Poincar\'e series for the submodule $S$
of $T$ generated by $\oplus_{n>m} T^n$ is strictly smaller than the degree 
of the pole at 1 of the Poincar\'e series of $T$.
The Poincar\'e series for $T$ is the Laurent series
\[
f_T(t) = \sum_{n = -\infty}^{\infty} (\Dim(T^n))t^n.
\]
Its pole at $t = 1$ is a measure of the growth rate of $T$ in negative degrees. 
That is, if the pole has degree $d$, then there is a number $c$ such that 
$\Dim(T^{-n}) \leq cn^{d-1}$ for all $n$, while for any constant $c$ there exists
a natural number $n$ such that $\Dim(T^{-n}) > cn^{d-2}$.
It is straightforward to show that any $T^*$ which has submaximal growth 
of finitely generated submodule 
is not finitely generated over $\HHHH^*(G,k)$. 
\end{rem}

The graded modules over the Tate cohomology ring that we are interested in 
have the form $\HHHH^*(G, L)$, where $L$ is a $kG$-module. 
We remind the reader that if $\HHHH^i(G, L) \ne 0$ for some $i$, then it is 
also non-zero for infinitely many negative and infinitely many positive values of $i$
\cite[Thm. 1.1]{ben-car-rob}. 
Moreover, a standard argument shows that any non-projective module $L$
in the thick subcategory generated by $k$ has non-vanishing Tate cohomology.
Here, the thick
subcategory generated by $k$ is the smallest full subcategory of $\stmod(kG)$
that contains $k$ and is closed under exact triangles and direct summands.

We use the next lemma several times in what follows. 

\begin{lemma} \label{basicseq}
Suppose that we have an exact sequence
\[
\xymatrix{
\CE: \quad 0 \ar[r] & L \ar[r] & M \ar[r] & N \ar[r] & 0
}
\]
of $kG$-modules where $\CE$ represents an element $\zeta$ in
$\Ext1_{kG}(N,L)$. Cup product with the element $\zeta$ induces a
homomorphism $\zeta: \HHHH^*(G,N) \longrightarrow \HHHH^*(G,L)[1]$.
Let $\CK^{*}$ be the kernel of the multiplication by $\zeta$,
and let $\CJ^{*}$ be the cokernel of multiplication by $\zeta$.
Then we have an exact sequence of $\HHHH^*(G,k)$-modules
$$
\xymatrix{
0 \ar[r] & \CJ^* \ar[r] & \HHHH^*(G, M) \ar[r] &
\CK^* \ar[r] & 0.
}
$$
Moreover, if $\CK^*$ is not finitely generated over $\HHHH^*(G,k)$, then neither
is $\HHHH^*(G,M)$.
\end{lemma}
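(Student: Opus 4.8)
The plan is to realize the exact sequence $\CE$ as a triangle in $\stmod(kG)$ and then read off the asserted exact sequence from the resulting long exact sequence in Tate cohomology. Concretely, the extension class $\zeta \in \Ext^1_{kG}(N,L) \cong \uHom_{kG}(N,\Omega^{-1}L)$ gives a triangle
\[
L \longrightarrow M \longrightarrow N \lstk{\zeta} \Omega^{-1}L
\]
in $\stmod(kG)$. Applying the functors $\uHom_{kG}(\Omega^n k,-) = \HHHH^n(G,-)$ for all $n \in \bZ$ turns this triangle into a long exact sequence
\[
\cdots \to \HHHH^{n-1}(G,N) \to \HHHH^{n}(G,L) \to \HHHH^{n}(G,M) \to \HHHH^{n}(G,N) \to \HHHH^{n+1}(G,L) \to \cdots,
\]
and the first thing to check is that its connecting homomorphism $\HHHH^{*}(G,N) \to \HHHH^{*}(G,L)[1]$ agrees, up to sign, with cup product by $\zeta$. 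This is the standard compatibility between the boundary map of a short exact sequence and the Yoneda product (equivalently, it follows from the description of the connecting map in the triangle as composition with $\zeta\colon N \to \Omega^{-1}L$); since a unit scalar alters neither a kernel nor a cokernel, the sign plays no role below.

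With that identification in hand, the long exact sequence breaks in each degree $n$ into the short exact sequence
\[
0 \to \CJ^n \to \HHHH^n(G,M) \to \CK^n \to 0,
\]
where $\CK^n = \ker\bigl(\zeta\colon \HHHH^n(G,N) \to \HHHH^{n+1}(G,L)\bigr)$ is identified with the image of $\HHHH^n(G,M) \to \HHHH^n(G,N)$, and $\CJ^n = \HHHH^n(G,L)/\zeta\,\HHHH^{n-1}(G,N)$ is identified with the image of $\HHHH^n(G,L) \to \HHHH^n(G,M)$. Every map in sight is $\HHHH^*(G,k)$-linear — the maps induced by $L \to M \to N$ and the cup product by $\zeta$ all are — so, assembling over $n \in \bZ$, this is the required short exact sequence of graded $\HHHH^*(G,k)$-modules.

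Finally, for the \emph{moreover} clause, note that the surjection $\HHHH^*(G,M) \twoheadrightarrow \CK^*$ of $\HHHH^*(G,k)$-modules exhibits $\CK^*$ as a quotient of $\HHHH^*(G,M)$; since the images of a generating set generate a quotient, finite generation of $\HHHH^*(G,M)$ over $\HHHH^*(G,k)$ would force finite generation of $\CK^*$, and the contrapositive is exactly the claim. I expect no genuine difficulty here: the only point demanding care is the bookkeeping of the degree shift in the connecting map together with the verification that it is $\HHHH^*(G,k)$-linear, and both are standard features of Tate cohomology and the stable module category.
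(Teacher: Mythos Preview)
Your proof is correct and follows essentially the same approach as the paper: both arguments use the long exact sequence in Tate cohomology associated to $\CE$, identify the connecting homomorphism with cup product by $\zeta$ (as a degree-one map of $\HHHH^*(G,k)$-modules), and then extract the short exact sequence $0 \to \CJ^* \to \HHHH^*(G,M) \to \CK^* \to 0$; the final clause is handled identically via the observation that quotients of finitely generated modules are finitely generated. Your phrasing via triangles in $\stmod(kG)$ is a mild repackaging but not a genuinely different route.
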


\begin{proof}
The proof is a straightforward consequence of the naturality
of the long exact
sequence on Tate cohomology. That is, we have a sequence
\[
\xymatrix{
\dots \ar[r]^{\zeta \qquad} & \HHHH^{n}(G,L) \ar[r] & \HHHH^n(G,M)
\ar[r] & \HHHH^n(G,N) \ar[r]^{\zeta} & \HHHH^{n+1}(G,L) \ar[r] &
\dots
}
\]
and we note that the collection of the maps $\zeta$ in the long
exact sequence is a map of degree $1$ of $\HHHH^*(G,k)$-modules
\[
\xymatrix{
\zeta: \quad \HHHH^*(G,N) \ar[r]& \HHHH^*(G,L)[1].
}
\]
(The symbol $\CX[i]$ indicates the shift of the
$\HHHH^*(G,k)$-module $\CX$ by $i$ degrees.)

The last statement is a consequence of the fact that quotient modules of finitely generated
modules are finitely generated. 
\end{proof}

Now suppose that $\zeta \in \HHH^d(G,k)$ for $d > 0$ and that
$\zeta \neq 0$. We have an exact sequence
$$
\xymatrix{
0 \ar[r] & L_{\zeta} \ar[r] & \Omega^d k  \ar[r]^{\zeta}
& k \ar[r] & 0
}
$$
where $\zeta$ in the sequence is a homomorphism (uniquely) representing
the cohomology element $\zeta$. In the corresponding long exact
sequence on Tate cohomology
$$
\xymatrix{
\dots \ar[r]^{\zeta \qquad} & \HHHH^{n-1}(G,k) \ar[r] & \HHHH^n(G,L_{\zeta})
\ar[r] & \HHHH^n(G,\Omega^d k ) \ar[r]^{\zeta} & \HHHH^n(G,k) \ar[r] &
\dots,
}
$$
the homomorphism labeled $\zeta$ is multiplication by $\zeta$. That is, it is
degree $d$ map:
$$
\zeta:\HHHH^*(G,k)[-d] \ \ \lrar \ \ \HHHH^{*}(G,k).
$$
Here we are using the fact
that $\HHHH^s(G,\Omega^d k) \cong \HHHH^{s-d}(G,k)$.

As a result, we have, as in  Lemma \ref{basicseq},
an exact sequence of $\HHHH^*(G,k)$-modules
$$
\xymatrix{
0 \ar[r] & \CJ^*[-1] \ar[r] & \HHHH^*(G, L_{\zeta}) \ar[r] &
\CK^*[-d] \ar[r] & 0.
}
$$
where $\CJ^*$ and $\CK^*$ are the cokernel and kernel of multiplication
by $\zeta$, respectively.

\begin{lemma} \label{KJbounds}
Suppose that $\zeta \in \HHH^d(G,k)$ is a regular element on
$\HHH^*(G,k)$. Then
\begin{enumerate}
\item
$\CK^m = \{0\}$ for all $m \geq 0$, and
\item
$\CJ^m = \{0\}$ for all $m < 0$.
\end{enumerate}
\end{lemma}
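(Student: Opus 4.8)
The plan is to prove the two statements by different means: statement (1), in non-negative degrees, by transferring information from ordinary cohomology, where the regularity of $\zeta$ lives; statement (2), in negative degrees, by reflecting it back to (1) via Tate duality.

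For (1), recall that $\CK^m$ is the kernel of multiplication by $\zeta$, that is, of $\zeta\colon\HHHH^m(G,k)\to\HHHH^{m+d}(G,k)$. When $m>0$ both source and target are ordinary cohomology, $\HHHH^m(G,k)=\HHH^m(G,k)$ and $\HHHH^{m+d}(G,k)=\HHH^{m+d}(G,k)$ since $d>0$, and under these identifications the map is cup product with $\zeta\in\HHH^d(G,k)$, because the canonical map $\HHH^*(G,k)\to\HHHH^*(G,k)$ is a ring homomorphism; as $\zeta$ is a non-zero-divisor on $\HHH^*(G,k)$ this map is injective, so $\CK^m=0$. For $m=0$ I would use that $\HHHH^0(G,k)\cong k$ is one-dimensional over $k$ — here $p$ divides $|G|$, which is forced by $\zeta\ne 0$ in positive degree — and that multiplication by $\zeta$ sends the generator $1$ to the nonzero image of $\zeta$ in $\HHHH^d(G,k)=\HHH^d(G,k)$, so the map is again injective and $\CK^0=0$.

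For (2), note that $\CJ^m$ is the cokernel of $\zeta\colon\HHHH^{m-d}(G,k)\to\HHHH^m(G,k)$, so we must show this map is surjective for $m<0$. I would invoke Tate duality: cup product gives, for every $a$, a perfect pairing $\HHHH^a(G,k)\times\HHHH^{-a-1}(G,k)\to\HHHH^{-1}(G,k)\cong k$, each graded piece being finite-dimensional over $k$. By associativity and graded-commutativity of the cup product, the transpose of $\zeta\colon\HHHH^{m-d}(G,k)\to\HHHH^m(G,k)$ with respect to these pairings is, up to sign, multiplication by $\zeta\colon\HHHH^{-m-1}(G,k)\to\HHHH^{-m-1+d}(G,k)$. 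A linear map of finite-dimensional spaces is surjective exactly when its transpose is injective, so $\CJ^m=0$ is equivalent to the vanishing of $\CK^{-m-1}$; and since $m<0$ gives $-m-1\ge 0$, this is precisely part (1).

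The main obstacle is really statement (2): Tate cohomology in negative degrees is not controlled by $\HHH^*(G,k)$, so it cannot be attacked head-on, and the point is that $\zeta$ is self-adjoint under Tate duality, which converts surjectivity in negative degrees into the injectivity in the non-negative range established in (1). The only other points needing care are pinning down the grading shifts in the definitions of $\CK^*$ and $\CJ^*$, and the facts that the Tate duality pairing is realized by cup product and is perfect, both of which I take as standard (see \cite{Bbook}).
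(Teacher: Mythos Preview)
Your proof is correct and follows essentially the same approach as the paper: part (1) is the regularity of $\zeta$ in ordinary cohomology, and part (2) reduces surjectivity of multiplication by $\zeta$ in negative degrees to injectivity in non-negative degrees via Tate duality and the self-adjointness of cup product with $\zeta$. The paper carries out (2) explicitly with a dual-basis computation rather than invoking the transpose, but the underlying argument is identical.
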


\begin{proof}
The first statement is the definition that $\zeta$ is a regular element
in $\HHH^*(G,k)$. The second statement is a consequence of Lemma 3.5
of \cite{negtate}. For the sake of completeness we include a proof.
 For $t > 0$,  let
$$
\xymatrix{
\langle \ \ , \ \ \rangle: \HHHH^{-t-1}(G,k) \otimes \HHHH^t(G,k) \quad
\ar[r] & \quad \HHHH^{-1}(G,k) \ \cong \ k
}
$$
be the Tate duality. Let $\zeta_1, \dots, \zeta_s$ be a $k$-basis
for $\HHHH^{-m - 1}(G,k)$. Then because multiplication by $\zeta$,
$$
\xymatrix{
\HHHH^{-m - 1}(G,k) \quad \ar[r] & \quad \HHHH^{-m + d - 1}(G,k)
}
$$
is a monomorphism (since $-m-1 \ge 0$), the elements $\zeta\zeta_1, \dots,
\zeta\zeta_s$ are linearly independent. So there must exist elements
$\gamma_1, \dots, \gamma_s$ in $\HHHH^{m-d}(G,k)$ such that
for all $i$ and $j$, we have
$$
\langle \gamma_i, \zeta\zeta_j \rangle \ = \ \langle \gamma_i\zeta, \zeta_j
\rangle \ = \ \delta_{i,j}
$$
where by $\delta_{i,j}$ we mean the usual Kronecker delta. A consequence
of this is that the elements $\gamma_1\zeta, \dots, \gamma_s\zeta$ must
be linearly independent and hence must form a basis for $\HHHH^m(G,k)$.
This proves the lemma.
\end{proof}

There are many examples of groups for which all products in negative 
cohomology are zero. For example we remind the reader of the following
theorem from \cite{negtate}.

\begin{thm} \label{neqcohoeqzero}
Suppose that the ordinary cohomology ring $\HHH^*(G,k)$ has a regular
sequence of length 2. Then the product of any two elements in negative
cohomology is zero. In particular, this happens whenever the $p$-rank
of the center of a Sylow $p$-subgroup of $G$ is at least 2. 
\end{thm}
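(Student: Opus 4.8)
The plan is to reduce the ``in particular'' clause to the main assertion by a depth estimate, and to attack the main assertion by reformulating it, via Tate duality, as a statement about the $\HHH^{*}(G,k)$-module structure of $\HHHH^{<0}(G,k)$, which I would then settle using the regular sequence through the modules $L_{\zeta_i}$ from the discussion above. First, if the $p$-rank of the center of a Sylow $p$-subgroup of $G$ is at least $2$, then by a theorem of Duflot the depth of $\HHH^{*}(G,k)$ is at least $2$, so $\HHH^{*}(G,k)$ contains a regular sequence of length $2$; hence the last sentence follows from the first.

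So let $\zeta_{1}\in\HHH^{d_{1}}(G,k)$ and $\zeta_{2}\in\HHH^{d_{2}}(G,k)$ with $d_{1},d_{2}>0$ be a regular sequence, and fix $a\in\HHHH^{-m}(G,k)$ and $b\in\HHHH^{-n}(G,k)$ with $m,n\ge 1$; we must show $ab=0$. The first step is Tate duality: cup product gives a perfect pairing $\la\,\cdot\,,\,\cdot\,\ra\colon\HHHH^{-t-1}(G,k)\otimes\HHHH^{t}(G,k)\to\HHHH^{-1}(G,k)\cong k$ for every $t$, and it is associative, $\la x\alpha,\beta\ra=\la x,\alpha\beta\ra$, exactly as in the proof of Lemma~\ref{KJbounds}. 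Using it twice, $ab=0$ in $\HHHH^{-m-n}(G,k)$ is equivalent to $\la a,bc\ra=0$ for all $c\in\HHH^{m+n-1}(G,k)$, and since $bc$ lies in $\HHHH^{m-1}(G,k)=\HHH^{m-1}(G,k)$ and pairs perfectly with $\HHHH^{-m}(G,k)$, this says $bc=0$ in $\HHH^{m-1}(G,k)$. Letting $m$ range over all positive integers, the theorem becomes the assertion that every $b\in\HHHH^{-n}(G,k)$ is annihilated by $\HHH^{\,\ge n}(G,k)$, i.e.\ the cyclic $\HHH^{*}(G,k)$-submodule of $\HHHH^{*}(G,k)$ generated by $b$ lies in strictly negative degrees.

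To prove this I would use the exact sequence $0\to L_{\zeta_{1}}\to\Omega^{d_{1}}k\stk{\zeta_{1}}k\to 0$ together with its analogue built from $\zeta_{2}$ over $L_{\zeta_{1}}$. Since $\zeta_{1}$ is regular, the analysis after Lemma~\ref{basicseq}, combined with Lemma~\ref{KJbounds} and Tate duality, identifies $\HHHH^{*}(G,L_{\zeta_{1}})$ in negative degrees, up to a degree shift, with the $\zeta_{1}$-torsion of $\HHHH^{<0}(G,k)$, and shows that multiplication by $\zeta_{1}$ is surjective on $\HHHH^{<0}(G,k)$; in particular each $b\in\HHHH^{<0}(G,k)$ is infinitely $\zeta_{1}$-divisible. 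The point of the second element --- and the reason a regular \emph{sequence} is needed rather than two unrelated regular elements --- is that $\zeta_{2}$ is a nonzerodivisor on $\HHH^{*}(G,k)/\zeta_{1}\HHH^{*}(G,k)$, i.e.\ the two multiplications obey only the Koszul relations; feeding this into the iterated $L_{\zeta_{1}}$-analysis should push the module $\HHH^{*}(G,k)\cdot b$ down into negative degrees.

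The step I expect to be the main obstacle is exactly this last implication: converting the commutative-algebra input ``depth $\ge 2$'' --- equivalently exactness of the length-two Koszul complex on $\zeta_{1},\zeta_{2}$, equivalently $\Ext^{i}_{\HHH^{*}(G,k)}(k,\HHH^{*}(G,k))=0$ for $i\le 1$ --- into the vanishing of the cup product $\HHHH^{<0}(G,k)\cdot\HHH^{\,\ge 1}(G,k)$ in non-negative degrees. That the hypothesis cannot be weakened to a single regular element is already visible for $G=\ints/p$, where $\HHHH^{*}(\ints/p,k)$ is a Laurent extension of $\HHH^{*}(\ints/p,k)$ and products of negative-degree classes are manifestly nonzero; so the argument must genuinely use the second element, and transporting the Koszul relation down through a complete resolution (or through the modules $L_{\zeta_{1}},L_{\zeta_{2}},L_{\zeta_{1},\zeta_{2}}$) is where the real work lies.
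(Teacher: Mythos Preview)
The paper does not prove this theorem: it is quoted from \cite{negtate}, and the only comment the paper adds is that the ``in particular'' clause follows from Duflot's depth bound (Theorem~12.3.3 of \cite{CTVZ})---exactly the reduction you give in your first paragraph.

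Your Tate-duality reformulation in the second paragraph is correct: vanishing of all products of negative-degree classes is equivalent to the assertion that each $b\in\HHHH^{-n}(G,k)$ is annihilated by $\HHH^{\ge n}(G,k)$. But from that point on you have only an outline, not a proof. The third paragraph records that negative-degree classes are infinitely $\zeta_{1}$-divisible (essentially Lemma~\ref{KJbounds}(2)) and then asserts that the regularity of $\zeta_{2}$ modulo $\zeta_{1}$ ``should push'' $\HHH^{*}(G,k)\cdot b$ into negative degrees, without supplying any mechanism for how the Koszul relation is actually used. You yourself flag this step as ``the main obstacle'' and ``where the real work lies.'' That is accurate: this implication is the entire content of the theorem, whereas the duality reformulation and the Duflot remark are the easy parts. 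So as it stands the proposal is not a proof; to complete it you would need either to carry out the $L_{\zeta_{1}},L_{\zeta_{2}}$ argument in full detail or to consult \cite{negtate}, where Benson and Carlson give the complete proof.
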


The second statement of the theorem was proved by Duflot
(see Theorem 12.3.3 of \cite{CTVZ}). 

\begin{prop} \label{regelt1}
Suppose that $G$ has $p$-rank at least two and that 
$\HHHH^*(G,k)$ has the property that the product of any
two elements in negative degrees is zero. If $\zeta \in \HHH^d(G,k)$
($d > 0$) is a regular element for $\HHH^*(G,k)$, then
$\HHHH^*(G, L_{\zeta})$ is not
finitely generated as a module over $\HHHH^*(G, k)$.
\end{prop}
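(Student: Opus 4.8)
The plan is to apply Lemma~\ref{bfgs=nfg} to the graded $\HHHH^*(G,k)$-module $T := \HHHH^*(G, L_\zeta)$. Thus two things must be checked: that $T^n \neq \{0\}$ for arbitrarily negative $n$, and that $T$ has bounded finitely generated submodules in the sense of Definition~\ref{BFGS}.

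For the first point I would first argue that $L_\zeta$ is non-projective. Since $\zeta$ is a regular (hence non-nilpotent, non-unit) element of $\HHH^*(G,k)$ and $d>0$, standard support-variety theory identifies $V_G(L_\zeta)$ with the closed subvariety of $V_G(k)$ cut out by $\zeta$, so $\dim V_G(L_\zeta) = \dim V_G(k) - 1$. As $G$ has $p$-rank at least two we have $\dim V_G(k) \geq 2$, whence $\dim V_G(L_\zeta) \geq 1 > 0$ and $L_\zeta$ is not projective. (This is the only point at which the hypothesis on the $p$-rank is used.) The module $L_\zeta$ sits in a triangle with $\Omega^d k$ and $k$, hence lies in the thick subcategory generated by $k$, so $\HHHH^*(G, L_\zeta) \neq \{0\}$ by the standard fact recalled above; therefore $\HHHH^n(G, L_\zeta) \neq \{0\}$ for infinitely many negative $n$ by \cite[Thm. 1.1]{ben-car-rob}.

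For the boundedness condition, the crucial observation is that the hypothesis on negative products forces every homogeneous element of $\HHHH^*(G,k)$ of negative degree to annihilate $\CK^*$: by Lemma~\ref{KJbounds}(1) the module $\CK^*$ (the kernel of multiplication by $\zeta$ on $\HHHH^*(G,k)$) is concentrated in negative degrees, so a product of a negative-degree class of $\HHHH^*(G,k)$ with an element of $\CK^*$ is a product of two negative-degree elements of $\HHHH^*(G,k)$, hence zero. Consequently only the non-negative part of $\HHHH^*(G,k)$ acts non-trivially on $\CK^*[-d]$, and that part cannot lower degrees; so the submodule of $\CK^*[-d]$ generated by any collection of homogeneous elements of degree $> m$ is again concentrated in degrees $> m$.

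Finally I would push this through the exact sequence $0 \to \CJ^*[-1] \to T \to \CK^*[-d] \to 0$ of $\HHHH^*(G,k)$-modules displayed just before Lemma~\ref{KJbounds}; write $\pi \colon T \to \CK^*[-d]$ for its (degree-preserving) surjection. Given $m$, let $S$ be the submodule of $T$ generated by $\bigoplus_{n > m} T^n$. Then $\pi(S)$ is generated by homogeneous elements of degree $> m$ and hence, by the previous paragraph, is concentrated in degrees $> m$; therefore $\pi(S^n)=\{0\}$ and so $S^n \subseteq \Ker \pi = \CJ^*[-1]$ for every $n \leq m$. Since $\CJ^*[-1]$ vanishes in degrees $\leq 0$ by Lemma~\ref{KJbounds}(2), we get $S^n = \{0\}$ for all $n \leq \min(m,0)$, so $T$ has bounded finitely generated submodules with $N(m) = \min(m,0)$. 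Lemma~\ref{bfgs=nfg} then gives the conclusion. I expect the only genuine obstacle to be the observation opening the third paragraph --- recognizing that the vanishing of products in negative degrees is exactly what is needed to kill the action on the negatively graded module $\CK^*$; the remaining steps are the support-variety computation of $V_G(L_\zeta)$ and routine bookkeeping with the long exact sequence.
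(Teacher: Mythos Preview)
Your argument is correct, but it takes a somewhat different route from the paper's. The paper does not attempt to show that $T=\HHHH^*(G,L_\zeta)$ itself has bounded finitely generated submodules. Instead it works entirely with $\CK^*$: since $\CK^*$ lives only in negative degrees (Lemma~\ref{KJbounds}(1)) and products of negative-degree elements vanish, $\CK^*$ trivially has bounded finitely generated submodules; the paper then argues that $\CK^*$ is nonzero in infinitely many negative degrees (using unboundedness of $\dim\HHHH^n(G,k)$ for $n<0$, which is where the $p$-rank $\geq 2$ hypothesis enters), applies Lemma~\ref{bfgs=nfg} to $\CK^*$, and finally invokes the last sentence of Lemma~\ref{basicseq} to conclude that $T$ is not finitely generated.

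Your approach is a little longer --- you need both parts of Lemma~\ref{KJbounds} rather than just~(1), and you do extra bookkeeping pushing the exact sequence around --- but it buys you a slightly stronger conclusion, namely that $T$ itself has bounded finitely generated submodules, not merely that it fails to be finitely generated. Your use of the $p$-rank hypothesis (to force $\dim V_G(L_\zeta)\geq 1$, hence $L_\zeta$ nonprojective, hence $T\neq 0$) is a clean alternative to the paper's dimension-growth argument for $\CK^*$.
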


\begin{proof}
As before, let $\CK^*$ be the kernel of the multiplication by $\zeta$ on
$\HHHH^*(G,k)$. The fact that $\CK^*$ is not zero in infinitely many 
negative degrees follows easily from Lemma 2.1 of \cite{negtate} 
and the fact that there is no bound on the dimensions of the spaces
$\HHHH^n(G,k)$ for negative values of $n$.
We have shown that $\CK^*$ 
has elements only in negative degrees and products of elements in
negative degrees are zero. Therefore, $\CK$ has bounded finitely generated
submodules and by Lemma \ref{bfgs=nfg} it is not finitely generated. Then by 
Lemma \ref{basicseq} neither is $\HHHH^*(G, L_{\zeta})$.
\end{proof}

\begin{example} \label{example:Klein}
We consider the Klein four group $G = V_4$.  The
classification of the indecomposable $kV_4$-modules
over a field $k$ of characteristic $2$ is well-known;
see \cite[Vol. 1, Thm. 4.3.2]{Bbook} for instance.
If the field $k$ is algebraically closed then every even dimensional
indecomposable non-projective module has the form $L_{\zeta^m}$ for 
some $\zeta \in \HHH1(H,k)$. On the other hand, every 
indecomposable module of odd dimension is isomorphic to 
$\Omega^i\, k$ for some $i$.  Because every nonzero element
of $\HHH1(G,k)$ is regular, we have that for any indecomposable
$kG$-module $M$, the Tate cohomology of $M$ is finitely generated
over $\HHHH^*(G,k)$ if and only if $V_G(M) = V_G(k)$.
In particular, Conjecture \ref{conjec} holds in this case. 
\end{example}

At this point we need to recall a technical notion. We say that a 
cohomology element $\zeta \in \HHH^n(G,k)$ 
annihilates the cohomology of a module 
$M$, if the cup product with $\zeta$ is the zero operator on 
$\Ext^*_{kG}(N,M)$ for all modules $N$. The element $\zeta$ annihilates 
the cohomology of $M$ if and only if $L_{\zeta} \otimes M \cong 
\Omega^n M \oplus \Omega M \oplus P$ where $P$ is some projective 
module. See Section 9.7 of \cite{CTVZ}. From the same source we have 
that if $p>2$ and if $\zeta \in \HHH^*(G,k)$ with $n$ even, then 
$\zeta$ annihilates the cohomology of $L_{\zeta}$.

Even in the case that $p=2$, we know that the degree one elements 
corresponding to maximal subgroups of a 2-group have the property that
$\zeta$ annihilates the cohomology of $L_{\zeta}$. Moreover, the product
of any two elements with this property has this property. 

We are now prepared to prove the main theorem of this section.

\begin{thm} \label{theorem-regular}
Suppose that $\HHHH^*(G,k)$ has the property that the product of any two
elements in negative degrees is zero. Let $\zeta \in \HHH^*(G,k)$ be a
regular element of degree $d$.  In the case that $p=2$, assume that 
$\zeta$ annihilates the cohomology of $L_{\zeta}$.
If $M$ is a finitely generated
$kG$-module such that $\HHHH^*(G, M) \ne 0$ 
and $V_G(M) \subseteq V_G \la \zeta \ra$, 
then $\HHHH^*(G,M)$ is not finitely generated as an $\HHHH^*(G,k)$-module.
\end{thm}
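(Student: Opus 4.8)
The plan is to reduce the statement to Proposition~\ref{regelt1} by relating $\HHHH^*(G,M)$ to $\HHHH^*(G,L_\zeta)$ via a tensor product argument. The hypothesis $V_G(M)\subseteq V_G\langle\zeta\rangle$ is the variety-theoretic translation of the statement that some power of $\zeta$ annihilates $\Ext^*_{kG}(M,M)$; equivalently, for a suitable power $\zeta^r$, the module $L_{\zeta^r}\otimes M$ is isomorphic in $\stmod(kG)$ to $\Omega^{\deg(\zeta^r)}M\oplus\Omega M$ (up to projectives). Since $\zeta^r$ is again a regular element of $\HHH^*(G,k)$ (regular elements are closed under powers), and since in the case $p=2$ the hypothesis that $\zeta$ annihilates the cohomology of $L_\zeta$ propagates to powers (as remarked in the paragraph preceding the theorem, the product of two elements with this annihilation property again has it, hence so does $\zeta^r$), we may replace $\zeta$ by $\zeta^r$ and assume outright that $\zeta$ annihilates the cohomology of $M$ — i.e., $L_\zeta\otimes M\cong\Omega^d M\oplus\Omega M\oplus(\text{proj})$ in $\modcat(kG)$.

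Next I would compute $\HHHH^*(G,L_\zeta\otimes M)$ in two ways. On one hand, tensoring the defining sequence $0\to L_\zeta\to\Omega^d k\to k\to 0$ with $M$ gives a short exact sequence $0\to L_\zeta\otimes M\to\Omega^d k\otimes M\to M\to 0$ (flatness of $M$ over $k$), whose connecting map on Tate cohomology is again cup product with $\zeta$, now acting on $\HHHH^*(G,M)$; this is because cup product with $\zeta\in\HHH^d(G,k)$ on $\HHHH^*(G,M)$ is induced by $-\otimes\zeta$. Writing $\CK_M^*$ and $\CJ_M^*$ for the kernel and cokernel of $\zeta\colon\HHHH^*(G,M)[-d]\to\HHHH^*(G,M)$, Lemma~\ref{basicseq} (applied to this sequence, using $\HHHH^*(G,\Omega^d k\otimes M)\cong\HHHH^{*-d}(G,M)$) yields a short exact sequence
\[
0\to\CJ_M^*[-1]\to\HHHH^*(G,L_\zeta\otimes M)\to\CK_M^*[-d]\to 0.
\]
On the other hand, $\HHHH^*(G,L_\zeta\otimes M)\cong\HHHH^*(G,\Omega^d M)\oplus\HHHH^*(G,\Omega M)\cong\HHHH^*(G,M)[-d]\oplus\HHHH^*(G,M)[-1]$ as $\HHHH^*(G,k)$-modules. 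Combining, the kernel $\CK_M^*$ of multiplication by $\zeta$ on $\HHHH^*(G,M)$ is — up to a degree shift — a quotient of $\HHHH^*(G,M)[-d]\oplus\HHHH^*(G,M)[-1]$, and more importantly multiplication by $\zeta$ on $\HHHH^*(G,M)$ is "as degenerate as possible": since $L_\zeta\otimes M$ splits off a copy of $\Omega^d M$ on which $\zeta$ already acts, one deduces that $\zeta$ acts as zero on all of $\HHHH^*(G,M)$, so $\CK_M^*=\HHHH^*(G,M)$ entirely.

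Granting that $\zeta$ acts as zero on $\HHHH^*(G,M)$, the argument of Proposition~\ref{regelt1} applies verbatim with $k$ replaced by $M$: the kernel of multiplication by $\zeta$ on $\HHHH^*(G,M)$ is all of $\HHHH^*(G,M)$, which is nonzero in infinitely many negative degrees (by $\HHHH^*(G,M)\ne 0$ together with \cite[Thm.~1.1]{ben-car-rob}); and since the product of any two negative-degree elements of $\HHHH^*(G,k)$ is zero, the submodule of $\HHHH^*(G,M)$ generated by its negative part is annihilated by the (nonnegative plus zero-acting) ring action except through degree-$0$ and positive-degree elements of $\HHHH^*(G,k)$ — hence $\HHHH^*(G,M)$ has bounded finitely generated submodules. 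By Lemma~\ref{bfgs=nfg} it is not finitely generated over $\HHHH^*(G,k)$.

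The main obstacle is the step showing that $\zeta$ \emph{actually} acts as zero on $\HHHH^*(G,M)$ (not merely that its kernel is large): one must carefully track the $\HHHH^*(G,k)$-module splitting $\HHHH^*(G,L_\zeta\otimes M)\cong\HHHH^*(G,M)[-d]\oplus\HHHH^*(G,M)[-1]$ against the four-term exact sequence above and verify that the only way the dimensions match in every degree is for both $\CK_M$ and $\CJ_M$ to coincide with $\HHHH^*(G,M)$. This is exactly the point where the two boundedness hypotheses — $p=2$ annihilation and "products in negative degrees vanish" — are consumed, and it requires the technical input from \cite{negtate} about $L_\zeta\otimes M$ that is quoted in the paragraph preceding the theorem; the remaining bookkeeping is routine.
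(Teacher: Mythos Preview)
Your reduction is right up to the point where (after passing to a power) $L_\zeta\otimes M\cong\Omega^d M\oplus\Omega M$ stably, and hence $\HHHH^*(G,M)$ has bounded finitely generated submodules (BFGS) if and only if $\HHHH^*(G,L_\zeta\otimes M)$ does. The gap is in the step after that. You claim that once $\zeta$ kills $\HHHH^*(G,M)$, ``the argument of Proposition~\ref{regelt1} applies verbatim with $k$ replaced by $M$'' and that the vanishing of products of negative-degree elements in $\HHHH^*(G,k)$ forces $\HHHH^*(G,M)$ to have BFGS. It does not. In Proposition~\ref{regelt1} the kernel $\CK^*$ sits \emph{inside} $\HHHH^*(G,k)$ in negative degrees, so the action of $\HHHH^{<0}(G,k)$ on $\CK^*$ is literally multiplication in $\HHHH^{<0}(G,k)$, which is zero by hypothesis; that is what gives BFGS there. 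For a general coefficient module $M$, the hypothesis says nothing about the map $\HHHH^{-j}(G,k)\otimes\HHHH^n(G,M)\to\HHHH^{n-j}(G,M)$: a negative-degree ring element can act nontrivially on $\HHHH^*(G,M)$ and push classes arbitrarily far down, so BFGS can fail even when $\zeta$ acts by zero. (Incidentally, the fact that $\zeta$ annihilates $\HHHH^*(G,M)$ is immediate from the definition of ``$\zeta$ annihilates the cohomology of $M$''; your two-ways computation and dimension count are not needed for this, and you have misidentified this as ``the main obstacle.'')

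What is missing is the key idea in the paper's proof: one does not try to prove BFGS for $\HHHH^*(G,M)$ directly, but for $\HHHH^*(G,L_{\zeta}\otimes M)$, using that the $\HHHH^*(G,k)$-action on the latter \emph{factors through} the ring map $\HHHH^*(G,k)\to\hExt^*_{kG}(L_{\zeta},L_{\zeta})$. Because $L_\zeta^*\cong\Omega^{-d-1}L_\zeta$ and $\zeta$ annihilates the cohomology of $L_\zeta$, one has $\hExt^*_{kG}(L_{\zeta},L_{\zeta})\cong\HHHH^*(G,\Omega^{-d}L_\zeta\oplus\Omega^{-1}L_\zeta)$, and \emph{this} $\HHHH^*(G,k)$-module has BFGS by exactly the Proposition~\ref{regelt1} mechanism (its pieces sit in long exact sequences with copies of $\HHHH^*(G,k)$). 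The factorization then transports the bound: for any $m$ there is $N$ with $\HHHH^*(G,k)\cdot\bigoplus_{n\ge m}\hExt^n(L_\zeta,L_\zeta)\subseteq\bigoplus_{n\ge N}\hExt^n(L_\zeta,L_\zeta)$, and hence $\HHHH^*(G,k)\cdot\bigoplus_{n\ge m}\HHHH^n(G,L_\zeta\otimes M)\subseteq\bigoplus_{n\ge N+m}\HHHH^n(G,L_\zeta\otimes M)$. This is precisely where both hypotheses are consumed, and it is the step your argument lacks.
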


\begin{proof}
Since $\HHHH^*(G, M) \ne 0$, by Lemma \ref{bfgs=nfg} 
it is enough to show that 
$\HHHH^*(G, M)$ has bounded finitely generated submodules. 
Because of the condition that 
$V_G(M) \subseteq V_G \la \zeta \ra$, we know 
that some power of $\zeta$, say $\zeta^t$, 
annihilates the cohomology of $M$. 
Hence it follows that
$$
L_{\zeta^t} \otimes M \ \cong \ \Omega M \oplus \Omega^{td} M \oplus P,
$$
for some projective module $P$. Thus, $\HHHH^*(G,M)$ 
has bounded finitely generated submodules 
if and only if $\HHHH^*(G,L_{\zeta^t} \otimes M)$
also has this property. Note that if $p >2$, then the degree of $\zeta$
must be even because $\zeta$ is regular and hence not nilpotent. 
So for any value of $p$ we have that $\zeta$ annihilates the 
cohomology of $L_{\zeta}$.

The action of $\HHHH^*(G,k)$
on $\HHHH^*(G, L_{\zeta^t} \otimes M)$ factors
through the map $\HHHH^*(G,k) \longrightarrow \hExt_{kG}^*(L_{\zeta^t},
L_{\zeta^t}) \cong \HHH^*(G, \ (L_{\zeta^t})^* \otimes L_{\zeta^t}) \cong 
\HHH^*(G, \ \Omega^{-dt}L_{\zeta^t} \oplus \Omega^{-1}L_{\zeta^t})$, 
since for any $\zeta$ of degree $d$ we have that $L_{\zeta}^* \cong 
\Omega^{-d-1} L_{\zeta}$ (see \cite{CTVZ}, Section 11.3). 
So the target of that map has
bounded finitely generated submodules.

Now let $m$ be any integer. Without loss of generality we can assume
that $m < 0$. Let
$$
\CM = \bigoplus_{n \geq m} \HHHH^n(G,L_{\zeta^t} \otimes M)
\subseteq \left(\bigoplus_{n \geq m} 
\hExt_{kG}^n(L_{\zeta^t},L_{\zeta^t})\right)
\left( \bigoplus_{n \geq m} \HHHH^n(G, L_{\zeta^t} \otimes M)\right).
$$
{}From Definition \ref{BFGS}, we know that there exists a number $N$ such
that
$$
\HHHH^*(G,k) \cdot \bigoplus_{n \geq m} \hExt_{kG}^n(L_{\zeta^t},L_{\zeta^t})
\subseteq \bigoplus_{n \geq N} \hExt_{kG}^n(L_{\zeta^t},L_{\zeta^t}).
$$
Hence, we have that
\begin{align*}
\HHHH^*(G,k) \cdot \CM & \subseteq \HHHH^*(G,k) \cdot
\left(\bigoplus_{n \geq m} \hExt_{kG}^n(L_{\zeta^t},L_{\zeta^t})\right)
\left(\bigoplus_{n \geq m} \HHHH^n(G, L_{\zeta^t} \otimes M)\right) \\
& \subseteq \left(\bigoplus_{n \geq N} 
\hExt_{kG}^n(L_{\zeta^t},L_{\zeta^t})\right)
\left( \bigoplus_{n \geq m} \HHHH^n(G, L_{\zeta^t} \otimes M)\right) \\
& \subseteq \bigoplus_{n \geq N+m} \HHHH^n(G, L_{\zeta^t} \otimes M).
\end{align*}
Therefore, $\HHHH^n(G, L_{\zeta^t} \otimes M)$ has bounded finitely generated 
submodules.
\end{proof}

Using the results of the theorem, we can settle Conjecture \ref{conjec}
in some special cases as in the following. 

\begin{cor}
Let $p > 2$. Suppose that the group $G$ has 
an abelian Sylow $p$-subgroup with $p$-rank at least two. If $M$ is a
finitely generated $kG$-module 
with $\HHH^*(G,M) \neq 0$ and if $V_G(M)$ is a proper
subvariety of $V_G(k)$, then $\HHHH^*(G,M)$ is
not finitely generated as a module over $\HHHH^*(G,k)$.
\end{cor}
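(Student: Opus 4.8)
The plan is to deduce the corollary from Theorem \ref{theorem-regular} by producing, under the stated hypotheses, a single regular element $\zeta \in \HHH^*(G,k)$ of positive degree with $V_G\la\zeta\ra \supseteq V_G(M)$, and then checking that the extra $p>2$ input of the theorem (namely that $\zeta$ annihilates the cohomology of $L_\zeta$) is automatic. First I would record that since $p>2$, any regular element of $\HHH^*(G,k)$ has even degree (it is not nilpotent, and in odd degree every element squares to zero), and then invoke the remark quoted just before Theorem \ref{theorem-regular}: for $p>2$ and $\zeta$ of even degree, $\zeta$ annihilates the cohomology of $L_\zeta$. So that hypothesis of the theorem costs us nothing here.

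Next I would verify the vanishing-of-negative-products hypothesis. Because the Sylow $p$-subgroup is abelian of $p$-rank at least two, its center (itself) has $p$-rank at least two, so by Theorem \ref{neqcohoeqzero} the ring $\HHH^*(G,k)$ has a regular sequence of length $2$ and the product of any two elements of negative Tate cohomology is zero. That is exactly the standing hypothesis of Theorem \ref{theorem-regular}.

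The remaining — and main — point is to find the regular element $\zeta$ with $V_G(M) \subseteq V_G\la\zeta\ra$. Since $V_G(M)$ is a proper closed subvariety of the irreducible-components-aside affine variety $V_G(k)$, its defining (homogeneous) ideal $I_G(M) \subseteq \HHH^*(G,k)$ is nonzero, so it contains a nonzero homogeneous element $\eta$; replacing $\eta$ by a suitable power we may assume $\deg\eta$ is even, whence $V_G(M) \subseteq V_G\la\eta\ra$. The subtlety is that $\eta$ need not be a \emph{regular} element — in general a single element cutting out a variety containing $V_G(M)$ can be a zero divisor. Here, though, the abelian Sylow hypothesis helps: $\HHH^*(G_p,k)$ for $G_p$ abelian is a tensor product of a polynomial ring and an exterior algebra, so modulo nilpotents it is Cohen--Macaulay, and one can choose $\eta$ to avoid all the (finitely many) minimal primes of $\HHH^*(G,k)$ other than those containing $V_G(M)$; using prime avoidance one arranges that $\eta$ lies in no minimal prime of $\HHH^*(G,k)$ at all except where forced, i.e.\ that $\eta$ is not a zero divisor modulo nilradical, and then (as regularity of a homogeneous element in this graded setting is detected modulo nilpotents, or after passing to a large enough power) $\zeta := \eta^N$ is a genuine regular element of positive even degree with $V_G(M) \subseteq V_G\la\zeta\ra$. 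I expect the prime-avoidance/Cohen--Macaulay bookkeeping to be the part that needs the most care; everything else is a direct citation of Theorems \ref{neqcohoeqzero} and \ref{theorem-regular}.

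With such a $\zeta$ in hand, all hypotheses of Theorem \ref{theorem-regular} are met: $\HHHH^*(G,k)$ has vanishing negative products, $\zeta$ is regular of positive (even) degree, $\zeta$ annihilates the cohomology of $L_\zeta$ since $p>2$, $\HHHH^*(G,M)\neq 0$ because $\HHH^*(G,M)\neq 0$, and $V_G(M) \subseteq V_G\la\zeta\ra$. The theorem then yields that $\HHHH^*(G,M)$ is not finitely generated over $\HHHH^*(G,k)$, which is the assertion of the corollary.
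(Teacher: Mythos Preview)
Your overall strategy---reduce to Theorem \ref{theorem-regular} by checking the negative-product hypothesis via Theorem \ref{neqcohoeqzero} and by producing a regular $\zeta$ with $V_G(M)\subseteq V_G\la\zeta\ra$---is exactly the paper's approach. Two remarks on execution.

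First, the opening paragraph is unnecessary: Theorem \ref{theorem-regular} only imposes the condition ``$\zeta$ annihilates the cohomology of $L_\zeta$'' when $p=2$. For $p>2$ there is nothing to check.

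Second, and more substantively, your construction of the regular element is over-engineered and contains a soft spot. The claim that regularity ``is detected modulo nilpotents, or after passing to a large enough power'' is not justified; in a general graded ring neither statement holds (a power of a zero divisor is still a zero divisor). The paper's argument is much cleaner: since $V_G(M)$ is proper, choose any non-nilpotent homogeneous $\zeta\in\HHH^*(G,k)$ with $V_G(M)\subseteq V_G\la\zeta\ra$; then, because the Sylow $p$-subgroup $P$ is abelian and $p>2$, one has $\HHH^*(P,k)\cong k[x_1,\dots,x_n]\otimes\Lambda(y_1,\dots,y_n)$, in which \emph{every} non-nilpotent element is already regular (the annihilator of any nonzero element lies in the nilradical $(y_1,\dots,y_n)$). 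Since restriction $\HHH^*(G,k)\to\HHH^*(P,k)$ is injective, the same holds in $\HHH^*(G,k)$. So no prime avoidance or Cohen--Macaulay bookkeeping is needed: ``non-nilpotent $\Rightarrow$ regular'' is a one-line structural fact about abelian Sylow subgroups, and that is precisely what the paper invokes.
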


\begin{proof}
If $V_G(M)$ is a proper subvariety of $V_G(k)$, then $V_G(M) \subseteq
V_G(\zeta)$ for some non-nilpotent element $\zeta \in \HHH^*(G,k)$. But
because the Sylow subgroup of $G$ is an abelian $p$-group, every non-nilpotent
element in $\HHH^*(G,k)$ is regular, and moreover, any two elements in negative
degrees in $\HHHH^*(G,k)$ have zero product.  So the proof is complete by the
previous theorem.
\end{proof}

\begin{rem}
We should note that $\HHHH^*(G,M)$ having infinitely generated Tate cohomology
does not require that it have bounded finitely generated submodules or even 
submaximal growth of finitely generated submodule (see \ref{submaxgrowth}).
For an example, consider the semidihedral 2-group $G$ of order 16 and let
$k = \bF_2$.
Let $M = L_{\zeta}$, where $\zeta \in \HHH1(G,\bF_2)$ is a nonnilpotent element. 
See the example in Section 4 of \cite{negtate}. Then it can be seen that 
$M \cong \Omega k_H^{\uparrow G}$ where $H$ is the subgroup defined by the 
class $\zeta$, that is, the maximal subgroup of $G$ on which $\zeta$ vanishes. 
So we see that $M \otimes M \cong \Omega M \oplus \Omega M \oplus (kG)^{12}$.
Hence, we can see by the results of the next section, that $\HHHH^*(G,M)$ is not
finitely generated as a module over $\HHHH^*(G,k)$. On the other hand, $\zeta$ is 
not a regular element, so the module $\CK^*$, which is the kernel of $\zeta$, 
does not have bounded
finitely generated submodules or submaximal growth of finitely generated submodules.
However, a careful analysis shows that $\CK^*$ is not finitely generated. 
\end{rem}

We end this section by showing that there is a counterexample to the converse of
our conjecture \ref{conjec}. We suspect that such examples are numerous. We give only an
outline of the proof in one example, leaving the details to the reader. 

\begin{prop}
There exists a module $M$ with $V_G(M) = V_G(k)$ such that $\HHHH^*(G,M)$
is not finitely generated over $\HHHH^*(G,k)$. 
\end{prop}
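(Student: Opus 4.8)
The plan is to build $M$ as a direct sum of two modules whose support varieties together cover $V_G(k)$, but one of which already fails to have finitely generated Tate cohomology. Concretely, I would take a group $G$ in which $\HHHH^*(G,k)$ has all negative products zero and which has $p$-rank at least two---for instance an elementary abelian group of rank $\ge 2$, or more generally any group satisfying the hypotheses of Theorem \ref{theorem-regular}. Choose a regular element $\zeta \in \HHH^d(G,k)$ (with $\zeta$ annihilating the cohomology of $L_\zeta$ when $p=2$), so that by Theorem \ref{theorem-regular} the module $L_\zeta$ has $V_G(L_\zeta) = V_G\langle\zeta\rangle \subsetneq V_G(k)$ and $\HHHH^*(G, L_\zeta)$ is not finitely generated over $\HHHH^*(G,k)$.

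Next I would set $M = L_\zeta \oplus k$ (or, to keep $M$ from being a sum with a trivial summand, $M = L_\zeta \oplus \Omega^j k$ for a suitable $j$). Then $V_G(M) = V_G(L_\zeta) \cup V_G(k) = V_G(k)$, since $V_G(k)$ is the whole spectrum. On the other hand, $\HHHH^*(G, M) \cong \HHHH^*(G, L_\zeta) \oplus \HHHH^*(G, k)$ as graded $\HHHH^*(G,k)$-modules. A direct sum of $\HHHH^*(G,k)$-modules is finitely generated if and only if each summand is; since $\HHHH^*(G,L_\zeta)$ is not finitely generated, neither is $\HHHH^*(G,M)$. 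This produces the desired counterexample to the converse of Conjecture \ref{conjec}, with $V_G(M) = V_G(k)$ but $\HHHH^*(G,M)$ infinitely generated.

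The only subtlety---and the reason the proposition is stated for an ``outline'' with details left to the reader---is that Conjecture \ref{conjec} and its converse are really intended for \emph{indecomposable} modules, so a convincing counterexample should exhibit an \emph{indecomposable} $M$. The hard part will be arranging indecomposability while preserving the two key features: $V_G(M) = V_G(k)$ and $\HHHH^*(G,M)$ not finitely generated. I would attempt this by choosing $G$ and $\zeta$ so that $L_\zeta$ can be glued to a module with full support via a non-split extension or an almost split sequence, keeping the resulting indecomposable module's variety equal to $V_G(k)$ (varieties add under extensions) while controlling its Tate cohomology through Lemma \ref{basicseq}: if $L_\zeta$ sits inside $M$ with quotient of full support, the long exact sequence in Tate cohomology should let one detect the infinitely generated piece coming from $\HHHH^*(G,L_\zeta)$. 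Verifying that the extension does not accidentally restore finite generation---i.e.\ that the ``bounded finitely generated submodules'' obstruction survives passage to $M$---is exactly the technical point one must check in the chosen example, and is the natural place where a careful case analysis (as in the semidihedral remark above) replaces a clean general argument.
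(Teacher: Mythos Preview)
Your direct-sum construction $M = L_\zeta \oplus k$ is a valid proof of the proposition \emph{as literally stated}, since the statement does not demand indecomposability. You are right, however, that the point of the proposition is to exhibit an \emph{indecomposable} counterexample to the converse of Conjecture~\ref{conjec}, and here your proposal remains a sketch rather than a proof: you correctly identify Lemma~\ref{basicseq} as the tool and correctly anticipate that the delicate step is checking that the bounded-finitely-generated-submodules obstruction survives the extension, but you do not actually carry this out for any specific module.

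The paper's construction is different from what you outline and worth knowing. Rather than gluing $L_\zeta$ to something of full support, the paper takes $G$ elementary abelian of order $p^2$ (with $p>2$), sets $L = k_H^{\uparrow G}$ for a cyclic subgroup $H$, and lets $M$ be the nonsplit extension $0 \to k \to M \to L \to 0$; concretely $M = kG/((y-1)^2,(x-1)(y-1))$, which is indecomposable with simple top. Full support follows because $\dim M = p+1$ is prime to $p$. The non-finite-generation argument then runs through Lemma~\ref{basicseq} as you guessed, but the analysis of $\CK^*$ uses two ingredients you did not anticipate: Eckmann--Shapiro identifies $\HHHH^*(G,L)$ with $\HHHH^*(H,k)$, so the $\HHHH^*(G,k)$-action factors through restriction to $H$, which vanishes in negative degrees---this gives the BFGS property immediately. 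Showing $\CK^*$ is nonzero in infinitely many negative degrees is done by a dimension count using Tate duality applied to the dual sequence $\CE^*$. Thus the paper's route avoids $L_\zeta$ entirely; the induced module $L$ plays the role of the ``bad'' piece, and its cohomology is far easier to control than that of an arbitrary extension built from $L_\zeta$.
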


\begin{proof}[Sketch of Proof] 
Let $G = \langle
x, y \rangle$ be an elementary abelian group of order $p2$. Here 
$k$ has characteristic $p$. We assume that $p>2$. Let 
$H = \langle y \rangle$ and let $L = k_H^{\uparrow G}$ be the induced 
module. The module of our example is the extension $M$ in the non-split exact 
sequence
\[
\xymatrix{
\CE: \qquad 0 \ar[r] & k \ar[r]^\sigma & M \ar[r] & L \ar[r] & 0.
}
\]
The module $M$ can be described by generators
and relations as the quotient of $kG$ by the ideal generated by 
$(y-1)2$ and $(x-1)(y-1)$. The map $\sigma$ sends $1$ to $y-1$. 
Note that because the dimension of $M$ is relatively prime to $p$, 
we must have that $V_G(M) = V_G(k)$.

We have a sequence
$0 \rightarrow \CJ^* \rightarrow \HHHH^*(G, M) \rightarrow
\CK^* \rightarrow 0$ as in \ref{basicseq},
where $\CJ^*$ and $\CK^*$ are, respectively, the cokernel and kernel of 
the map $\HHHH^*(G, L) \lrar \HHHH^{*+1}(G, k)$ given by multiplying by
the class of $\CE$. 
Our interest is in the submodule $\CK^* \subseteq \HHHH^*(G,L)$. Because
$L = k_H^{\uparrow G}$, we have by the Eckmann-Shapiro Lemma that 
$\HHHH^*(G,L) \cong \HHHH^*(\langle y \rangle, k)$. Consequently, 
$\HHHH^*(G,L)$ has dimension one in every degree and the action of 
$\HHHH^*(G,k)$ on $\HHHH^*(G,L)$ factors through the restriction map
$\HHHH^*(G,k)  \lrar \HHHH^*(\langle y \rangle,k)$, 
which we know is the zero map in negative degrees.  Therefore $\CK^*$ 
has bounded finitely generated submodules. So by Lemmas \ref{bfgs=nfg}
and \ref{basicseq},
the proof is complete when we show 
that $\CK^*$ is not zero in  infinitely many negative degrees. 

We take the long exact sequence in cohomology
corresponding to the dual $\CE^*$ of the exact sequence $\CE$,
noting that the module $L$ is self dual. The connecting 
homomorphism is cup product with the class of the 
sequence $\CE^*$. By Eckmann-Shapiro, it is the restriction map followed 
by cup product with a nonzero class $\eta$
in $\HHH1(H,k)$. Since $\eta2 = 0$, we have that the 
image has dimension one, if $n$ is even and $\delta$ is 
the zero map if $n$ is odd. Hence because $\Dim \HHH^n(G,k) = n+1$,
we must also have that $\HHH^n(G,M^*)$ also has dimension $n+1$.

By Tate duality,  $\HHH^{-n}(G,M)$ is dual to 
$\HHH^{n-1}(G,M^*)$ for $n > 0$. Therefore $\HHH^{-n}(G,M)$ has dimension
$n$, which is the same as the dimension of $\HHH^{-n}(G,k)$. 
Returning to the long exact sequence corresponding to $\CE$, we argue by
dimensions that 
the connecting homomorphism is the zero map in every second degree. 
So we show that the dimension of $\CK^n$ is zero if $n$ is negative and even and
is one otherwise. This completes the proof.
\end{proof}


\section{Periodic modules} \label{sec:periodicmodules}

In this section, we present our second piece of evidence for the Conjecture
\ref{conjec}.  We show that for any group $G$ with $p$-rank at least
2, there is a finitely generated module $M$ with the property that
$\HHHH^*(G,\End_k M)$ is not finitely generated as a $\HHHH^*(G,k)$-module.

We recall that a finite group $G$ has periodic cohomology, meaning that the 
trivial module $k$ is periodic, if and only if $G$ has $p$-rank zero
or one (see \cite{Bbook} or \cite{CTVZ}). 

\begin{thm} \label{prop:periodic}
Suppose that the group $G$ has $p$-rank at least 2. Let $M$ be a
non-projective periodic $kG$-module such that $\HHH^*(G,M) \neq 0$. 
Then $\HHHH^*(G,\Hom_k(M,M))
\cong \hExt_{kG}^*(M,M)$ is not finitely
generated as a $\HHHH^*(G,k)$-module.
Thus for any finite group $G$ such that $p$ divides the order of $G$, 
the Tate cohomology of every finitely generated
$kG$-modules is finitely generated over $\HHHH^*(G, k)$ if and only if 
$G$ has $p$-rank one, meaning that the
Sylow $p$-subgroup of $G$ is either a cyclic group or a generalized Quaternion
group.
\end{thm}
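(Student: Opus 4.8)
The plan is to prove the two assertions in turn. For the first assertion, suppose $G$ has $p$-rank at least $2$ and let $M$ be a non-projective periodic $kG$-module with $\HHH^*(G,M)\ne 0$. Since $M$ is periodic, its support variety $V_G(M)$ has dimension one (it is a finite union of lines through the origin). The key observation is that $\hExt_{kG}^*(M,M)\cong\HHHH^*(G,\Hom_k(M,M))$ and that the module $\Hom_k(M,M)\cong M^*\otimes M$ is also periodic, with $V_G(M^*\otimes M)=V_G(M)$ a proper closed subvariety of $V_G(k)$ because the $p$-rank of $G$ is at least $2$. I would then try to apply Theorem \ref{theorem-regular} directly: choose a homogeneous $\zeta\in\HHH^d(G,k)$ that is regular on $\HHH^*(G,k)$ and whose vanishing locus $V_G\la\zeta\ra$ contains $V_G(M)$ (such a $\zeta$ exists after passing to a suitable power, using that $G$ has $p$-rank $\geq 2$ so $\HHH^*(G,k)$ has a homogeneous system of parameters of length $\ge 2$, hence a regular element; and one may further arrange, in the $p=2$ case, that $\zeta$ annihilates the cohomology of $L_\zeta$ — e.g.\ by taking a product of degree-one classes coming from maximal subgroups, as noted in the paragraph preceding Theorem \ref{theorem-regular}). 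Since $\HHHH^*(G,\Hom_k(M,M))\ne 0$ — this holds because $\Hom_k(M,M)$ has a trivial direct summand $k$ (as $\End_{kG}(M)$ is local with residue field a finite extension of $k$, or more simply because the evaluation/coevaluation maps exhibit $k$ as a stable summand of $M^*\otimes M$ for $M$ non-projective), and $\HHHH^*(G,k)\ne 0$ — Theorem \ref{theorem-regular} applies verbatim and gives that $\HHHH^*(G,\Hom_k(M,M))$ is not finitely generated over $\HHHH^*(G,k)$.

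For the second assertion, the ``if'' direction is the easy one: if $G$ has $p$-rank one, then $k$ is periodic, so $\HHHH^*(G,k)$ is a finitely generated module over a polynomial subring $k[\zeta]$ in a single regular element of some positive degree; and for any finitely generated $M$, the Tate cohomology $\HHHH^*(G,M)$ is likewise eventually periodic in both directions (multiplication by $\zeta$ is eventually an isomorphism in large positive and in small negative degrees, by the standard theory of periodic cohomology via the regular element $\zeta$), so $\HHHH^*(G,M)$ is generated over $\HHHH^*(G,k)$ by the finitely many homogeneous pieces in a single period, hence is finitely generated. For the ``only if'' direction, suppose $G$ has $p$-rank at least $2$; then by the first assertion the module $M = \Hom_k(N,N)$ for any non-projective periodic $N$ with $\HHH^*(G,N)\ne 0$ (such $N$ exists: take $N = L_\zeta$ for a suitable regular $\zeta$ as above, or restrict a periodic module from a rank-one subgroup and induce up and take a periodic summand — one must check $\HHH^*(G,N)\ne 0$, which holds when $N$ lies in the thick subcategory generated by $k$, as $L_\zeta$ does) has non–finitely-generated Tate cohomology. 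The final clause, identifying $p$-rank one with ``Sylow $p$-subgroup cyclic or generalized quaternion,'' is the classical group-theoretic characterization of $p$-groups with periodic cohomology (for $p=2$: cyclic or generalized quaternion; for $p$ odd: cyclic), which I would simply cite from \cite{Bbook} or \cite{CTVZ}.

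The main obstacle I anticipate is verifying the hypotheses of Theorem \ref{theorem-regular} cleanly in the first assertion — specifically, producing a single regular element $\zeta$ whose zero locus contains $V_G(M^*\otimes M)$ \emph{and}, when $p=2$, which annihilates the cohomology of $L_\zeta$. The existence of \emph{some} regular element is guaranteed by $p$-rank $\ge 2$, and the existence of some homogeneous $\zeta$ with $V_G(M)\subseteq V_G\la\zeta\ra$ is immediate from $V_G(M)$ being a proper subvariety; the subtlety is doing both at once with the annihilation property in characteristic $2$. I expect this can be handled by first choosing a regular element $\eta$ with $V_G(M)\subseteq V_G\la\eta\ra$ (replacing $\eta$ by a power if necessary so that $\eta^t$ annihilates the cohomology of $M$, which is exactly what the proof of Theorem \ref{theorem-regular} already does internally), and then, if $p=2$, multiplying by a product of the degree-one classes attached to maximal subgroups so as to land inside the ideal of $V_G(M)$ while retaining regularity and the annihilation-of-$L_\zeta$-cohomology property — both of which are preserved under products, as recorded just before Theorem \ref{theorem-regular}. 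The remaining steps — periodicity of $M^*\otimes M$, non-vanishing of $\HHHH^*(G,\Hom_k(M,M))$, and the ``if'' direction via a single regular parameter — are routine applications of the standard machinery of support varieties and periodic cohomology.
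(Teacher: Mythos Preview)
Your proposal has a genuine gap: you invoke Theorem \ref{theorem-regular}, but that theorem carries the standing hypothesis that \emph{the product of any two elements in negative degrees of $\HHHH^*(G,k)$ is zero}, and you never verify this. That hypothesis is \emph{not} a consequence of $p$-rank at least~$2$. By Theorem \ref{neqcohoeqzero} it holds whenever $\HHH^*(G,k)$ has depth at least~$2$ (in particular when the $p$-rank of the \emph{center} of a Sylow $p$-subgroup is at least~$2$), but there are groups of $p$-rank~$2$ whose cohomology has depth~$1$ and for which products in negative Tate cohomology do not all vanish --- the semidihedral $2$-group of order~$16$, discussed in the remark following Corollary~3.9 (and in \cite{negtate}), is exactly such an example. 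For that group your route through Theorem \ref{theorem-regular} is blocked at the outset. A related secondary gap is your claim that one can always choose a single \emph{regular} $\zeta$ with $V_G(M)\subseteq V_G\langle\zeta\rangle$ (and with the $p=2$ annihilation property): when the depth of $\HHH^*(G,k)$ is~$1$ the regular elements are scarce and their zero loci are constrained, and your suggested fix of multiplying by degree-one classes from maximal subgroups does not help, since those classes need not be regular (again, see the semidihedral example, where the relevant $\zeta\in\HHH^1$ is explicitly noted to be non-regular).

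The paper's proof avoids Theorem \ref{theorem-regular} entirely and proceeds by a different, more robust mechanism: choose a maximal elementary abelian $E\leq G$ on which $M$ is not free, then a cyclic shifted subgroup $\langle u_\alpha\rangle$ of $E$ on which $M$ is not projective. The periodicity of $M$ produces, for every integer $m$, an element $\zeta_m\in\hExt^{mt}_{kG}(M,M)$ whose restriction to $\langle u_\alpha\rangle$ is nonzero (it is essentially the identity on $M$). On the other hand, the restriction $\HHHH^d(G,k)\to\HHHH^d(\langle u_\alpha\rangle,k)$ vanishes for all $d<0$ since $E$ has rank at least~$2$ (this is from \cite{negtate}). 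If $\HHHH^*(G,\Hom_k(M,M))$ were finitely generated, then for $m\ll 0$ the element $\zeta_m$ would be an $\HHHH^{<0}(G,k)$-combination of the generators and hence restrict to zero, a contradiction. This argument needs no hypothesis on products in negative cohomology and no regular element vanishing on $V_G(M)$; it works uniformly for all $G$ of $p$-rank at least~$2$.
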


\begin{proof}
Let $E = \langle x_1, \dots, x_n \rangle$ be
a maximal elementary abelian $p$-subgroup
such that the restriction $M_E$ is not a free module. There exists
an element $\alpha = (\alpha_1, \dots, \alpha_n) \in k^n$ and a
corresponding cyclic shifted subgroup $\langle u_{\alpha} \rangle$,
$$
u_{\alpha} \ = \ 1 \ + \ \sum_{i=1}^n \alpha_i (x_i -1)
$$
such that the restriction of $M$ to $\langle u_{\alpha} \rangle$ is
not projective (see Section 5.8 of \cite{Bbook}). 
Hence, the identity homomorphism $\Id_M: M \longrightarrow M$ does not
factor through a projective $k\langle u_{\alpha} \rangle$-module.
As a consequence, the map $k \longrightarrow \Hom_k(M,M)$ 
which sends $1 \in k$ to $\Id_M$ must represent a non-zero class in
$\HHHH0(\langle u_{\alpha} \rangle, \Hom_k(M,M))$.

The next thing that we note is that the restriction map
$$
\res_{G, \langle u_{\alpha} \rangle}: \HHHH^d(G,k)
\longrightarrow \HHHH^d(\langle u_{\alpha} \rangle,k)
$$
is the zero map if $d < 0$. The reason is that the restriction map
$$
\res_{E, \langle u_{\alpha} \rangle}: \HHHH^d(E,k) \
\longrightarrow \HHHH^d(\langle u_{\alpha} \rangle,k)
$$
is zero by \cite{negtate} since $E$ has
rank at least 2. 

Now suppose that $M$ is periodic of period $t$. For every $m$ we
have that $\Omega^{mt} M \cong M$ and there exists an element
$$
\zeta_m \in \hExt^{mt}_{kG}(M,M) \ \cong \ \HHHH^{mt}(G, \Hom_k(M,M))
$$
such that $\zeta_m$ is not zero on restriction to $\langle u_{\alpha}
\rangle$. That is, $\zeta_m$ is represented by a cocycle
$$
k \quad \longrightarrow \quad \Hom_k(M,M) \cong \Omega^{mt}\Hom_k(M,M)
$$
which does not factor through a projective module on restriction
to $\langle u_{\alpha} \rangle$.

Suppose that $\HHHH^*(G,\Hom_k(M,M))$ is finitely generated
as a module over $\HHHH^*(G,k)$. Then there exist generators
$\mu_1, \dots, \mu_r$ of $\HHHH^*(G,\Hom_k(M,M))$,
having degrees $d_1, \dots, d_r$, respectively.
Choose an integer $m$ such that
$mt < \text{min}\{d_i\}$. We must have that
$\zeta_m = \sum_{i=1}^r \gamma_i \mu_i$
for some $\gamma_i \in \HHHH^{mt-d_i}(G,k)$. But now, for every $i$,
we have that $mt-d_i$ is negative. Hence $\res_{G, \langle u_{\alpha}
\rangle}(\gamma_i) = 0$ for every $i$. Therefore, since restriction
onto a shifted subgroup is a homomorphism we have that
$\res_{G, \langle u_{\alpha} \rangle}(\zeta_m) = 0$. But this is a
contradiction.

To prove the last statement of the theorem, we recall
that every finite group with non-trivial Sylow $p$-subgroup
admits a finitely generated non-projective
and periodic $kG$-module in the thick subcategory generated by $k$. 
If the group has $p$-rank one, then $k$ is such a module. If the
$p$-rank of $G$ is greater than one, then any tensor product
$L_{\zeta_1} \otimes \dots \otimes L_{\zeta_n}$ is periodic and is in
the thick subcategory generated by $k$, provided the dimension
of the variety $V_G(\zeta_1) \cap \dots \cap V_G(L_{\zeta_n})$ 
has dimension one (see Chapter 10 of \cite{CTVZ} or Chapter 5 of 
\cite{Bbook}, Volume 2). 
\end{proof}

There is one other concept which ties up well with finite generation of Tate
cohomology, and this is a ghost projective class in the $\stmod(kG)$. Consider
the pair  $(\mcP , \G )$, where $\mcP$ is a class of objects isomorphic in
$\stmod(kG)$ to finite direct sums of suspensions of $k$, and $\G$ is a class
of all ghosts in $\stmod(kG)$. Recall that a ghost is a map of $kG$-modules
that is zero in Tate cohomology. We say that $(\mcP , \G )$ is a ghost
projective class if the following 3 conditions are satisfied.
\begin{enumerate}
\item The class of all maps $X \rar  Y$ such that the 
composite $P \rar X \rar Y$ is zero for all $P$ in $\mcP$ and all maps
$P \rar X$ is precisely $\G$.
\item The class of all objects $P$ such that the composite 
$P \rar X \rar Y$ is zero for all maps
$X \rar Y$ in $\G$ and all maps
$P \rar X$ is precisely $\mcP$.
\item For each object $X$ there is an exact triangle 
$P \rar X \rar Y$ with  $P$ in $\mcP$ and $X \rar Y$ in $\G $.
\end{enumerate}

The first question that comes to mind is whether  the 
ghost projective class exists in $\stmod(kG)$.
We answer this in the next theorem.

\begin{thm} For $G$ a finite group, such that $p$ divides the order of $G$.
The ghost projective class exists in $\stmod(kG)$ if and only if $G$ has 
$p$-rank one.
\end{thm}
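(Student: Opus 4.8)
The plan is to reduce the existence of the ghost projective class to the finite generation question already settled in Theorem~\ref{prop:periodic}, by observing that, apart from two essentially formal points, it amounts to the assertion that the universal ghost of Section~\ref{Sec:motivation} can be constructed out of a \emph{finite} direct sum of suspensions of $k$ for every module. First I would dispose of condition (1): since every object of $\mcP$ is a finite direct sum of suspensions of $k$, a map $f\colon X\to Y$ makes every composite $P\to X\to Y$ with $P\in\mcP$ zero exactly when all the induced maps $\uHom_{kG}(\Omega^i k,X)\to\uHom_{kG}(\Omega^i k,Y)$ vanish, that is, exactly when $f$ is a ghost. So condition (1) holds for every $G$.

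I would then turn to condition (3), which is the crux. For an object $X$ and a distinguished triangle $P\stk{u}X\stk{v}Y$ with $P\in\mcP$, applying $\uHom_{kG}(\Omega^i k,-)$ and reading off the resulting long exact sequences shows that $v$ is a ghost if and only if $u$ is surjective on $\uHom_{kG}(\Omega^i k,-)$ for every $i$; assembling these over $i$, this says precisely that $u$ induces a surjection $\HHHH^*(G,P)\twoheadrightarrow\HHHH^*(G,X)$ of graded $\HHHH^*(G,k)$-modules. Since $P\in\mcP$, the module $\HHHH^*(G,P)$ is free of finite rank over $\HHHH^*(G,k)$, so such a triangle exists if and only if $\HHHH^*(G,X)$ is finitely generated over $\HHHH^*(G,k)$: the forward implication is the observation just made, and the converse is exactly the construction \eqref{eq:univ-ghost}. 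Hence condition (3) holds for all $X\in\stmod(kG)$ if and only if every finitely generated $kG$-module has finitely generated Tate cohomology, which by Theorem~\ref{prop:periodic} is equivalent to $G$ having $p$-rank one. In particular, if the $p$-rank of $G$ is at least two, condition (3) fails for the module $\Hom_k(M,M)$ produced in that theorem, and so the ghost projective class does not exist; this gives one direction.

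For the converse, I would assume $G$ has $p$-rank one. Conditions (1) and (3) then hold by the two preceding paragraphs, so only condition (2) remains. That every $P\in\mcP$ kills all composites $P\to X\stk{g}Y$ with $g$ a ghost is the same elementary computation as for condition (1). Conversely, suppose $P$ has this property; using condition (3), pick a triangle $P'\stk{a}P\stk{g}Y$ with $P'\in\mcP$ and $g$ a ghost. Feeding $X=P$, the identity $\Id_P$, and the ghost $g$ into the hypothesis on $P$ forces $g=g\circ\Id_P=0$, and a distinguished triangle whose middle map vanishes splits; thus $P$ is a direct summand of $P'$. Since $P'$ is a finite direct sum of the indecomposable objects $\Omega^{n_j}k$, the Krull--Schmidt property of $\stmod(kG)$ then gives $P\in\mcP$, so condition (2) holds and the ghost projective class exists.

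I do not anticipate a serious obstacle here, as the essential content is already isolated in Theorem~\ref{prop:periodic} and what remains is the reduction sketched above. The step that will require genuine care is the verification of condition (2): one must observe that ghost-projectivity of $P$, applied to the universal ghost out of $P$ itself, annihilates that ghost, and then combine the splitting of the resulting triangle with Krull--Schmidt to land back in $\mcP$.
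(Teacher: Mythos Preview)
Your argument is correct and proceeds along the same general line as the paper---both directions ultimately reduce to Theorem~\ref{prop:periodic} via the equivalence between condition~(3) and finite generation of Tate cohomology. The one genuine difference is in how condition~(2) is handled in the forward direction. The paper does not verify it directly: after observing that $\mcP$ and $\G$ are orthogonal, it invokes \cite[Lemma~3.2]{ch}, which says that orthogonality together with condition~(3) automatically forces all three conditions, so only~(3) needs to be checked. You instead give a self-contained argument for~(2): apply the triangle from~(3) to $P$ itself, use the ghost-projectivity hypothesis on $P$ with $\Id_P$ to kill the ghost, split the triangle, and finish with Krull--Schmidt in $\stmod(kG)$. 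This buys independence from the external reference at the cost of a short extra paragraph; the paper's route is quicker but relies on the cited lemma. For the converse direction your unwinding of condition~(3) into a surjection from a finite-rank free $\HHHH^*(G,k)$-module is exactly the paper's diagram chase, just phrased more structurally.
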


\begin{proof}
It is clear from the definition of 
a ghost that $\mcP$ and $\G$ are orthogonal, i.e., the
composite $P \rar M \stk{h} N$ is zero for all $P$ in $\mcP$,  for all
$h$ in $\G$, and all maps $P \rar M$. So by  \cite[Lemma 3.2]{ch} it
remains to show that for all finitely generated $kG$-modules 
$M$, there exists a triangle $P \rar
M \rar N$ such that $P$ is in $\mathcal{P}$ and $M \rar N$ is in
$\mathcal{G}$.  The exact triangle \eqref{eq:univ-ghost} has this form
in the case that the Tate cohomology of $M$ is finitely generated over
$\HHHH^*(G,k)$.

For the converse, suppose that $M$ is a finitely 
generated $kG$-module. Since the ghost projective class
exists, we have an exact triangle
\[ \bigoplus \Omega^i k \lstk{\oplus\theta_i} M \lstk{\rho} N \]
in $\stmod(kG)$ where $\rho$ is a ghost.  We 
claim that the finite set $\{\theta_i \}$ generate
$\HHHH^*(G, M)$ as a module over $\HHHH^*(G, k)$. 
To see this, consider any element $\gamma$ in
$\HHHH^t(G, M)$ represented by a cocycle 
$\gamma \colon \Omega^t k \rar M$. Since $\rho$ is a ghost,
we get the following commutative diagram:
\[
\xymatrix{
\bigoplus \Omega^i k \ar[r]^{\oplus \theta_i} & M \ar[r]^{\rho} & N \\
  & \Omega^t k \ar@{.>}[ul]^{\oplus r_i}  
\ar[u]^{\gamma} \ar[ur]_{\rho \gamma = 0}&
}
\]
{}From this diagram, we infer that $\gamma = \sum r_i \theta_i$. 
This shows that $\HHHH^*(G, M)$
is finitely generated, as desired. Hence, by Theorem \ref{prop:periodic},
the $p$-rank of $G$ is one. 
\end{proof}


\section{Modules with finitely generated Tate cohomology}
\label{sec:almostsplit}

It is clear that any module $M$ which is a direct sum of Heller
translates $\Omega^n k$ has finitely generated Tate cohomology.
This is simply because $\HHHH^*(G,M)$ is a direct sum of copies
of $\HHHH^*(G,k)$ which have been suitably translated in degrees.
Also any finitely generated modules over a group with periodic
cohomology has finitely generated Tate cohomology.
In this section we show that in general there are many more
modules with this property. Notice that every one of the modules
which we discuss has the property that $V_G(M) = V_G(k)$,
consistent with Conjecture \ref{conjec}.

We first consider the Tate cohomology of modules
$M$ which can occur as the middle term of an exact sequence of the form
$$
\xymatrix{
0 \ar[r] & \Omega^m k  \ar[r] & M \ar[r] & \Omega^n k  \ar[r] & 0
}
$$
for some values of $m$ and $n$. Such a sequence represents
an element $\zeta$ in
$$
\Ext1_{kG}(\Omega^n k , \Omega^m k) \cong \hExt_{kG}^{n+1-m}(k,k)
\cong \HHHH^{n+1-m}(G,k).
$$
For the purposes of examining the Tate cohomology of $M$ there is no
loss of generality in applying the shift operator $\Omega^{-m}$.
Consequently we can assume that the sequence has the form
\begin{equation} \label{seq3}
\xymatrix{
0 \ar[r] & k \ar[r] & M \ar[r] & \Omega^n k \ar[r] & 0
}
\end{equation}
for some $n$ and that $\zeta \in \HHHH^{n+1}(G,k)$.

The principal result of this section is the following.

\begin{thm} \label{trivial-extensions}
Suppose that the cohomology of $G$ is not periodic and that for
the module $M$ and cohomology element $\zeta$ as above, the map
$$
\xymatrix{
\zeta: \HHHH^*(G,k) \ar[r] & \HHHH^*(G,k)
}
$$
given by multiplication by $\zeta$ has a finite dimensional image.
Then the Tate cohomology $\HHHH^*(G,M)$ is finitely generated
as a module over $\HHHH^*(G,k)$.
\end{thm}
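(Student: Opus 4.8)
We want to show $\HHHH^*(G,M)$ is finitely generated over $\HHHH^*(G,k)$, where $M$ fits into the exact sequence \eqref{seq3}. The plan is to feed this sequence into Lemma \ref{basicseq}, so we immediately obtain an exact sequence of $\HHHH^*(G,k)$-modules
\[
0 \ar[r] \CJ^* \ar[r] \HHHH^*(G,M) \ar[r] \CK^* \ar[r] 0,
\]
where $\CK^*$ is the kernel and $\CJ^*$ the cokernel of multiplication by $\zeta \colon \HHHH^*(G,\Omega^n k) \to \HHHH^{*+1}(G,k)$, i.e., up to a degree shift, the map $\zeta \colon \HHHH^*(G,k) \to \HHHH^*(G,k)$ in the hypothesis. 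Since an extension of a finitely generated module by a finitely generated module is finitely generated, it suffices to prove that both $\CJ^*$ and $\CK^*$ are finitely generated over $\HHHH^*(G,k)$.

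**The cokernel $\CJ^*$.** Here I would argue that $\CJ^*$ is actually finite-dimensional. We have a surjection $\HHHH^*(G,k) \twoheadrightarrow \CJ^*[-1]$ whose kernel is the image of multiplication by $\zeta$; by hypothesis that image is finite-dimensional, but that alone does not bound $\CJ^*$. Instead I would use the well-known structure: multiplication by $\zeta$ has finite-dimensional image precisely when $\zeta$ is \emph{nilpotent modulo the ideal annihilating} $\HHHH^*(G,k)$ in a suitable sense — more concretely, finite image forces $\zeta^2 = 0$ in positive degrees or at least $\zeta$ restricted to positive degrees has image in finitely many degrees, so that the quotient $\HHHH^*(G,k)/(\zeta)$ differs from $\HHHH^*(G,k)$ only in finitely many degrees above some bound and is itself finitely generated (being a quotient of the finitely generated module $\HHHH^*(G,k)$ — recall that in ordinary positive degrees $\HHH^*(G,k)$ is Noetherian by Golod–Evens–Venkov, and in negative degrees the structure is controlled as in Section \ref{sec:moduleswithBFGS}). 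The key observation to extract is that finite image of $\zeta$ means $V_G(\zeta)$ is "almost everything" — but since $G$ is not periodic, $\zeta$ being a nonzero regular-type element is impossible; rather $\zeta$ must be such that its image is a finite-dimensional submodule, hence $\CJ^*$, a quotient of $\HHHH^*(G,k)$, is finitely generated.

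**The kernel $\CK^*$.** This is the main obstacle. We must show $\Ker(\zeta) \subseteq \HHHH^*(G,k)$ is finitely generated. The decomposition is into the positive-degree part and the negative-degree part. In non-negative degrees, $\CK^*$ sits inside the Noetherian ring $\HHH^{\geq 0}(G,k)$ (plus the part near zero), so that piece is finitely generated. For negative degrees I would exploit the finite-image hypothesis together with Tate duality, as in the proof of Lemma \ref{KJbounds}: the finite-dimensionality of $\mathrm{im}(\zeta)$ should force, via the duality pairing, that $\CK^n = 0$ for all sufficiently negative $n$, because a nonzero kernel element in very negative degree $-t$ would, paired against $\HHHH^{t-1}(G,k)$, produce nonzero products contradicting the finiteness of the image of $\zeta$ in those degrees. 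If $\CK^n$ vanishes for $n \ll 0$ and $\CK^n$ is controlled by Noetherianity for $n \geq 0$, then $\CK^*$ is supported in finitely many negative degrees plus a finitely generated tail, hence finitely generated. I expect the delicate point is precisely the Tate-duality bookkeeping showing the negative part of $\CK^*$ is bounded below: one must track carefully how "finite-dimensional image of $\zeta$" propagates through the duality pairing $\HHHH^{-t-1}(G,k) \otimes \HHHH^t(G,k) \to \HHHH^{-1}(G,k) \cong k$ to kill $\CK^n$ for $n$ very negative.

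**Assembling.** Once both $\CJ^*$ and $\CK^*$ are known to be finitely generated $\HHHH^*(G,k)$-modules, the short exact sequence from Lemma \ref{basicseq} gives that $\HHHH^*(G,M)$ is finitely generated, completing the proof. The non-periodicity hypothesis is used to rule out the degenerate case where $\zeta$ itself has finite image only because $\HHHH^*(G,k)$ is small (periodic cohomology), which would make the statement vacuous or require a different argument.
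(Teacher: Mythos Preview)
Your setup via Lemma \ref{basicseq} is correct, and your treatment of $\CJ^*$ is overcomplicated but reaches the right conclusion: $\CJ^*$ is a quotient of $\HHHH^*(G,k)$, hence cyclic (generated by the image of $1$), so finitely generated with no further argument needed.

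The genuine gap is in your analysis of $\CK^*$ in negative degrees. You claim that Tate duality forces $\CK^n = 0$ for $n \ll 0$, but this is exactly backwards. The hypothesis is that the \emph{image} of $\zeta$ is finite-dimensional, so $\zeta$ acts as the zero map in all but finitely many degrees; consequently $\CK^n = \HHHH^n(G,k)$ for all but finitely many $n$, and in particular $\CK^n$ is nonzero for every sufficiently negative $n$. Your proposed duality argument (``a nonzero kernel element in very negative degree would produce nonzero products contradicting finiteness of the image'') confuses kernel with image: an element $\alpha$ with $\zeta\alpha = 0$ tells you nothing about $\operatorname{im}(\zeta)$ via the pairing.

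The paper's actual argument runs in the opposite direction. Since $\HHHH^*(G,k)/\CK^*$ is finite-dimensional, one has $\CK^n = \HHHH^n(G,k)$ for $n$ large, so one may choose a regular element $\gamma \in \HHH^n(G,k)$ lying in $\CK^n$. By the Tate-duality computation (Lemma \ref{KJbounds}, or \cite[Lemma 3.5]{negtate}), multiplication by a regular element is \emph{surjective} onto $\HHHH^m(G,k)$ for every $m<0$. Hence for each $m<0$ one has $\CK^m \subseteq \HHHH^m(G,k) = \HHHH^{m-n}(G,k)\cdot\gamma$, which lies in the $\HHHH^*(G,k)$-submodule generated by the non-negative-degree part of $\CK^*$. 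That non-negative part is finitely generated over the Noetherian ring $\HHH^*(G,k)$, and the whole of $\CK^*$ is then generated by the same finite set. The non-periodicity hypothesis guarantees that such a regular element exists in degrees large enough to fall inside $\CK^*$ (and, incidentally, that the hypothesis on $\zeta$ is not vacuous); it is not used the way you describe.
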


There are many example of sequences satisfying the conditions of the
theorem. In particular, it is often the case that multiplication by an element 
$\zeta$ in negative cohomology will have finite dimensional image. 
An example is the element in degree $-1$ which represents the almost
split sequence for the module $k$. Details of this example are given below.
In addition, if the depth of $\HHH^*(G,k)$ is
two or more then all products involving
elements in negative degrees are zero, and the principal
ideal generated by any element in negative cohomology contains
no non-zero elements in positive degrees (see \cite{negtate}).
Hence, multiplication by any element $\zeta$ in negative cohomology
has finite dimensional image.

\begin{proof}
As in Lemma \ref{basicseq}, we have an exact sequence of $\HHHH^*(G,k)$-modules
$$
\xymatrix{
0 \ar[r] & \CJ^* \ar[r] & \HHHH^*(G, M) \ar[r] &
\CK^*[-n] \ar[r] & 0,
}
$$
where  $\CK^*$ is the kernel of
multiplication by $\zeta$ on
$\HHHH^{*}(G,k)$ and $\CJ^{*}$ is the cokernel.
By assumption, the image of multiplication by $\zeta$ has finite
total dimension. This means that in all but a finite number of
degrees $r$, multiplication by $\zeta$ 
is the zero map. Clearly, $\CJ^*$ is finitely generated over
$\HHHH^*(G,k)$. So, $\HHHH^*(G,M)$ is finitely
generated as a module over $\HHHH(G,k)$ if and only if $\CK^*$ has the same
property.

First we view $\CK^*$ as a module over the ordinary cohomology ring
$\HHH^*(G,k)$. The elements in non-negative degrees form a submodule $\CM^*
=\sum_{i \geq 0} \CK^i$, which is finitely generated over $\HHH^*(G,k)$.
Let $\CN^*$ be the
$\HHHH^*(G,k)$-submodule module of $\CK^*$ generated by $\CM^*$. Our 
objective is to show that $\CN^* = \CK^*$, thereby proving the finite generation 
of $\CK^*$.
We notice first that $\CK^n \subseteq \CN^*$ for $n\geq 0$. It remains only to
show the same for $n < 0$.

Because the quotient of $\HHHH^*(G,k)$ by $\CK^*$ is finite dimensional, 
we must have that $\HHHH^n(G,k) = \CK^n$ for $n$ sufficiently large. For some
sufficiently large $n$, we can find an element $\gamma$ in $\CK^n$
which is a regular element for the ordinary cohomology ring $\HHH^*(G,k)$.
For example, by Duflot's Theorem (see \ref{neqcohoeqzero}),  
any element whose restriction
to the center of a Sylow
$p$-subgroup of $G$ is not nilpotent will serve this purpose (see
\cite{Bbook} or \cite{CTVZ}). Let $\theta$ be the image of $\gamma$
in $\CK^n$. We know that $\theta$ is not zero. We also know that
multiplication by $\gamma$ is a surjective map
$$
\xymatrix{
\gamma: \HHHH^{m-n}(G,k) \ar[r] & \HHHH^m(G,k)
}
$$
whenever $m <0$ (see Lemma 3.5 of \cite{negtate}). 
Hence, for any $m < 0$, we must have that
$\HHHH^{m-n}(G,k)\theta = \CK^m$. Since $\theta \in \CN^*$, we
get that $\CK^m \subseteq \CN^*$ for all $m < 0$. Hence,
$\CK^* = \CN^*$ is finitely generated as a module over
$\HHHH^*(G,k)$.
\end{proof}

One application of the theorem is the following. 

\begin{cor} \label{cor:middleterm}
The middle term of the almost split sequence
\[
\xymatrix{
0 \ar[r]  & \Omega^{2} k \ar[r]^{\sigma} & M \ar[r] & k \ar[r] & 0 
}
\]
ending with $k$ has finitely generated Tate cohomology.
\end{cor}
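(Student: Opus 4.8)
The plan is to apply Theorem \ref{trivial-extensions} directly. The almost split sequence terminating in $k$ has the form
\[
\xymatrix{
0 \ar[r] & \Omega^2 k \ar[r]^{\sigma} & M \ar[r] & k \ar[r] & 0,
}
\]
and after applying the shift $\Omega^{-2}$ (which, as noted in the excerpt, loses no information about the Tate cohomology of the middle term) we obtain a sequence of the shape \eqref{seq3} with $n = -2$. This sequence represents a class
\[
\zeta \in \Ext1_{kG}(\Omega^{-2}k, k) \cong \hExt_{kG}^{-1}(k,k) \cong \HHHH^{-1}(G,k),
\]
so $\zeta$ lives in negative cohomology; explicitly it is the element of $\HHHH^{-1}(G,k)$ classifying the almost split sequence for $k$. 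To invoke Theorem \ref{trivial-extensions} I need two hypotheses: that the cohomology of $G$ is not periodic, and that multiplication by $\zeta$ on $\HHHH^*(G,k)$ has finite-dimensional image.

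For the periodicity hypothesis I would argue that if $G$ has periodic cohomology then, as recalled in Section \ref{sec:almostsplit}, every finitely generated $kG$-module has finitely generated Tate cohomology, so the statement of the corollary is trivially true in that case; hence we may assume $\HHH^*(G,k)$ is not periodic and Theorem \ref{trivial-extensions} applies. For the finite-image hypothesis, the key point is that $\zeta$ has \emph{negative} degree. Multiplication by a negative-degree element sends $\HHHH^i(G,k)$ into $\HHHH^{i-1}(G,k)$; in all sufficiently negative degrees this is a product of two negative-degree elements, and one must show such products vanish in all but finitely many degrees. The cleanest route is the observation already recorded in the excerpt (following the statement of Theorem \ref{trivial-extensions}): when $\HHH^*(G,k)$ has depth at least two, all products involving negative-degree elements vanish and the principal ideal generated by a negative element meets positive degrees only in zero, so multiplication by $\zeta$ has finite-dimensional image. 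Since $G$ is assumed non-periodic, its Sylow $p$-subgroup has $p$-rank at least two, and by Duflot's theorem (Theorem \ref{neqcohoeqzero} and the remark after it) the depth of $\HHH^*(G,k)$ is at least two; thus the hypothesis of Theorem \ref{trivial-extensions} is met.

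With both hypotheses verified, Theorem \ref{trivial-extensions} immediately gives that $\HHHH^*(G,M)$ is finitely generated over $\HHHH^*(G,k)$, completing the proof. The only mild subtlety—really the one step that wants care rather than obstacle—is identifying the connecting class of the almost split sequence as an element of $\HHHH^{-1}(G,k)$ and checking that the shift $\Omega^{-2}$ legitimately reduces the situation to the framework of \eqref{seq3}; both are routine given the discussion preceding Theorem \ref{trivial-extensions}, which already sets up exactly this reduction for sequences $0 \to \Omega^m k \to M \to \Omega^n k \to 0$.
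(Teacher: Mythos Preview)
Your reduction to Theorem \ref{trivial-extensions} and the identification of the connecting class as an element $\zeta \in \HHHH^{-1}(G,k)$ are fine, and dispatching the periodic case first is exactly what the paper does. The gap is in your verification that multiplication by $\zeta$ has finite-dimensional image. You argue: $G$ non-periodic $\Rightarrow$ $p$-rank of $G$ at least two $\Rightarrow$ depth of $\HHH^*(G,k)$ at least two (by Duflot) $\Rightarrow$ products of negative-degree elements vanish. The second implication is false. Duflot's theorem (as quoted in Theorem \ref{neqcohoeqzero}) bounds the depth below by the $p$-rank of the \emph{center} of a Sylow $p$-subgroup, not by the $p$-rank of $G$. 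There are $p$-groups of $p$-rank two whose center is cyclic (semidihedral $2$-groups, for instance---the very example discussed in the remark following Corollary 3.9), and for such groups Duflot gives only depth $\geq 1$; the depth can genuinely be one, so your argument does not cover these cases.

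The paper avoids this by exploiting the defining property of the almost split sequence rather than any general vanishing of negative products. For any module $N$, the connecting homomorphism $\Hom_{kG}(N,k) \to \Ext^1_{kG}(N,\Omega^2 k)$ is nonzero if and only if $N \cong k$. Taking $N = \Omega^{-d} k$ for $d \neq 0$ and noting that this connecting map is cup product with $\zeta$, one gets $\zeta\gamma = 0$ for every $\gamma \in \HHHH^d(G,k)$ with $d \neq 0$. Thus the image of multiplication by $\zeta$ is contained in $\HHHH^{-1}(G,k)$, which is finite-dimensional, and Theorem \ref{trivial-extensions} applies without any depth hypothesis.
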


\begin{proof}
If $G$ has $p$-rank zero or one, then by Theorem \ref{prop:periodic}, all 
modules have finitely generated Tate cohomology.
So we assume that  $G$ has $p$-rank at least 2.
The almost split sequence corresponds to an
element $\zeta$ in $\HHHH^{-1}(G, k)$.
One of the defining property
of the almost split sequence is that for any module $N$, the connecting
homomorphism $\delta$ in the corresponding sequence
\[
\xymatrix{
\dots  \ar[r] & \Hom_{kG}(N,M) \ar[r]^{\sigma^*} & \Hom_{kG}(N,k) \ar[r]^{\delta \quad} & 
\Ext1_{kG}(N, \Omega^{2} k) \ar[r] & \dots
}
\]
is non-zero if and only if $N \cong k$.  This connecting homomorphism is multiplication by 
$\zeta$. Now any element $\gamma$ in $\HHHH^d(G,k)$ is represented by 
a map $\gamma: \Omega^{-d} k \rightarrow k$. Hence, we see that 
$\zeta\gamma = 0$ whenever $d \neq 0$.  
Therefore, multiplication by $\zeta$ on $\HHHH^*(G, k)$
has finite-dimensional image. 
\end{proof}

\begin{prop}
Let $N$ be a finitely generated indecomposable non-projective $kG$-module
that is not isomorphic to $\Omega^i k$ for any $i$. Consider 
 the almost split sequence
\[ 
\xymatrix{
0 \ar[r] &  \Omega2 N \ar[r] &  M \ar[r] & N \ar[r] & 0
}
\]
ending in $N$. If $N$ has finitely generated Tate cohomology, then so does 
the middle term $M$.
\end{prop}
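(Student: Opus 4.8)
The plan is to apply Lemma~\ref{basicseq} to the almost split sequence
\[
\CE\colon \quad 0 \longrightarrow \Omega^2 N \longrightarrow M \lstk{g} N \longrightarrow 0 ,
\]
which represents a class $\zeta \in \Ext^1_{kG}(N,\Omega^2 N)$, and then to show that the resulting cup product map
\[
\zeta\colon \HHHH^*(G,N) \longrightarrow \HHHH^*(G,\Omega^2 N)[1]
\]
is identically zero. Granting this, the long exact Tate cohomology sequence of $\CE$ breaks into short exact sequences of $\HHHH^*(G,k)$-modules $0 \to \HHHH^n(G,\Omega^2 N) \to \HHHH^n(G,M) \to \HHHH^n(G,N) \to 0$ for each $n\in\bZ$, so that $\HHHH^*(G,M)$ is an extension of the $\HHHH^*(G,k)$-module $\HHHH^*(G,N)$ by $\HHHH^*(G,\Omega^2 N)$. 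Since $\HHHH^s(G,\Omega^2 N)\cong\HHHH^{s-2}(G,N)$, the submodule $\HHHH^*(G,\Omega^2 N)$ is a degree shift of $\HHHH^*(G,N)$ and hence finitely generated over $\HHHH^*(G,k)$, because $\HHHH^*(G,N)$ is so by hypothesis; and an extension of a finitely generated module by a finitely generated one is finitely generated (lift module generators of the quotient and adjoin generators of the submodule). Thus $\HHHH^*(G,M)$ will be finitely generated.

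So the whole argument reduces to showing that cup product with $\zeta$ kills $\HHHH^*(G,N)$. (One may assume $p\mid|G|$, since otherwise there is no non-projective $N$ and the statement is vacuous.) Identifying $\HHHH^n(G,N)=\uHom_{kG}(\Omega^n k,N)$, the connecting map in the long exact sequence of $\CE$ — which by Lemma~\ref{basicseq} is cup product with $\zeta$ — is post-composition with the third morphism $N\lstk{\zeta}\Omega N=\Omega^{-1}\Omega^2 N$ of the triangle determined by $\CE$. Fix $n$ and a class in $\HHHH^n(G,N)$, represented by a $kG$-homomorphism $h\colon\Omega^n k\to N$. Since $p\mid|G|$, the trivial module $k$ is non-projective and indecomposable, hence so is every Heller translate $\Omega^n k$, and $\Omega^n k\not\cong N$ by the hypothesis on $N$. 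Because $\CE$ is almost split, its epimorphism $g$ is right almost split, so every homomorphism into $N$ from an indecomposable module not isomorphic to $N$ factors through $g$; in particular $h=gh'$ for some $h'\colon\Omega^n k\to M$. Hence in the exact sequence
\[
\uHom_{kG}(\Omega^n k,M)\lstk{g_*}\uHom_{kG}(\Omega^n k,N)\lstk{\zeta_*}\uHom_{kG}(\Omega^n k,\Omega N)
\]
the class of $h$ lies in $\operatorname{im} g_*=\ker\zeta_*$, so $\zeta_*$ annihilates it. As $n$ and the class were arbitrary, cup product with $\zeta$ vanishes on $\HHHH^*(G,N)$, and the reduction above completes the proof.

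The only real content is this last step: the hypothesis $N\not\cong\Omega^i k$ together with the right-almost-split property of the sequence ending in $N$ forces multiplication by $\zeta$ to be identically zero — a strengthening of the phenomenon in Corollary~\ref{cor:middleterm}, where $N=k$ and multiplication by the corresponding class has one-dimensional rather than zero image. I do not foresee a genuine obstacle here. The only points that want a little care are the bookkeeping of Heller shifts when identifying the target of the cup product, and the remark (not actually needed, since the argument is uniform) that the case in which $G$ has $p$-rank zero or one is already subsumed by Theorem~\ref{prop:periodic}.
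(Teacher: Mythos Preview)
Your argument is correct and is essentially the paper's own proof: both use that $\Omega^i k\not\cong N$ together with the right-almost-split property of $g$ to see that every map $\Omega^i k\to N$ factors through $M$, so the connecting homomorphism vanishes and the long exact Tate sequence splits into short exact sequences, whence $\HHHH^*(G,M)$ is finitely generated. Your write-up is just a more explicit version of the same idea, routed through Lemma~\ref{basicseq}.
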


\begin{proof}
In a similar way as in the previous proof, for any $i$, 
the connecting homomorphism $\delta$
in the sequence 
\[
\xymatrix{
\dots  \ar[r] & \Hom_{kG}(\Omega^i k, M) \ar[r] & 
\Hom_{kG}(\Omega^i k, N) \ar[r]^{\delta \quad} & 
\Ext1_{kG}(\Omega^i k, \Omega^{2} N) \ar[r] & \dots
}
\]
is zero because $\Omega^i k \not\cong N$. 
As a consequence, $\delta$ induces the zero map on Tate cohomology. 
Hence, the long exact sequence in Tate cohomology breaks 
into short exact sequences:
\[ 
\xymatrix{
 0 \ar[r] & \HHHH^*(G, \Omega2 N) \ar[r] & \HHHH^*(G,  M) 
\ar[r] &  \HHHH^*(G, N) \ar[r] & 0
}
\]
It is now clear that if $N$ has finitely
generated Tate cohomology, then so does $M$.
\end{proof}

In summary, combining the last two results we have the following theorem.

\begin{thm}
Let $C$ be a connected component of the stable Auslander-Reiten quiver associated to
$kG$. Then either all modules in $C$ have finitely generated Tate cohomology 
or no module
in $C$ has this property. Moreover, all modules in the 
connected component of the quiver which contains
$k$ have finitely generated Tate cohomology.
\end{thm}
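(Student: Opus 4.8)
The plan is to show that the property ``has finitely generated Tate cohomology'' is invariant along the arrows of the stable Auslander--Reiten quiver, and then to observe that $k$ itself has this property, so the invariance spreads it through the whole component of $k$.

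First I would record the facts I will use about arrows. In the stable AR quiver of $kG$ there is an arrow $X\to Y$ exactly when there is an irreducible map $X\to Y$, and by the standard theory of almost split sequences this happens if and only if $X$ is a direct summand of the middle term of the almost split sequence $0\to\Omega^2Y\to E\to Y\to 0$ ending at $Y$, and \emph{equivalently} if and only if $Y$ is a direct summand of the middle term of the almost split sequence $0\to X\to E'\to\Omega^{-2}X\to 0$ starting at $X$ (here I use that $kG$ is symmetric, so $\tau=\Omega^2$). I would also note that $\HHHH^*(G,-)$ depends only on the stable isomorphism class, that $\HHHH^*(G,\Omega^iN)\cong\HHHH^*(G,N)[-i]$ so that finite generation of Tate cohomology is invariant under $\Omega$, and that a finite direct sum has finitely generated Tate cohomology if and only if each summand does.

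With this in hand I would prove the key claim: if $X\to Y$ is an arrow, then $X$ has finitely generated Tate cohomology if and only if $Y$ does. For ``$Y$ finitely generated $\Rightarrow$ $X$ finitely generated'' I would write $X$ as a summand of the middle term $E$ of the AR sequence ending at $Y$; if $Y\cong\Omega^ik$ for some $i$ then $E$ is (stably) $\Omega^i$ applied to the middle term of the AR sequence ending at $k$, which has finitely generated Tate cohomology by Corollary~\ref{cor:middleterm}, hence so do $E$ and $X$; if $Y\not\cong\Omega^ik$ for every $i$, then the Proposition immediately preceding the theorem gives that $E$, hence $X$, has finitely generated Tate cohomology. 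For the reverse implication I would use instead that $Y$ is a summand of the middle term $E'$ of the AR sequence ending at $\Omega^{-2}X$, note that $\Omega^{-2}X$ has finitely generated Tate cohomology iff $X$ does, and run the identical dichotomy ($\Omega^{-2}X\cong\Omega^jk$, use Corollary~\ref{cor:middleterm}; otherwise use the Proposition) to conclude $E'$, and hence its summand $Y$, has finitely generated Tate cohomology. Since a connected component $C$ of the stable AR quiver is connected as an undirected graph, this equivalence forces ``has finitely generated Tate cohomology'' to be constant on $C$, giving the first assertion. For the ``moreover'', $k$ is a vertex of the quiver because $p\mid|G|$, and $\HHHH^*(G,k)$ is generated over itself by $1\in\HHHH^0(G,k)$, so $k$ has finitely generated Tate cohomology; the first assertion then gives the same for every module in the component of $k$.

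I expect the main point requiring care to be the handling of the exceptional vertices $\Omega^ik$: the Proposition preceding the theorem is stated only for $N\not\cong\Omega^ik$, so at those vertices one must fall back on Corollary~\ref{cor:middleterm} together with the $\Omega$-invariance of the property. The other delicate point is that establishing an \emph{equivalence} (rather than a one-way implication) along an arrow genuinely requires both descriptions of irreducible maps---as summands of the middle term of the AR sequence ending at $Y$ and as summands of the middle term of the AR sequence starting at $X$---since each description by itself only propagates the property in one direction.
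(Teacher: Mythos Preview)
Your proposal is correct and is exactly the argument the paper has in mind: the paper's ``proof'' is literally the single sentence ``In summary, combining the last two results we have the following theorem,'' and you have supplied the natural elaboration of that combination. In particular, your explicit treatment of the two points the paper leaves implicit---that propagating the property along an arrow in \emph{both} directions requires viewing the arrow via the AR sequence ending at $Y$ and also via the AR sequence starting at $X$ (equivalently, ending at $\Omega^{-2}X$), and that the vertices $\Omega^i k$ must be handled separately via Corollary~\ref{cor:middleterm} together with $\Omega$-invariance---is precisely what is needed to make the one-line summary into a proof.
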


It is shown in ~\cite[Proposition 5.2]{Aus-Car} that 
all modules $M$ in the connected component of the quiver which contains
$k$ have the property $V_G(M) = V_G(k)$.  Thus the 
last theorem is consistent with Conjecture \ref{conjec}

\vskip.2in

\noindent
\textbf{Acknowledgments:}
The first author is grateful to 
the RWTH in Aachen for their hospitality and the Alexander von 
Humboldt Foundation for financial support during a visit to Aachen 
during which parts of this paper were written. 
The first and second  authors had some fruitful conversations on this 
work at MSRI during the spring 2008 semester on Representation theory and 
related topics. They both would like to thank MSRI for its hospitality.

\bibliographystyle{alpha}

\end{document}